\numberwithin{equation}{section}
\DeclareMathAlphabet{\mathdutchcal}{U}{dutchcal}{m}{n}
\SetMathAlphabet{\mathdutchcal}{bold}{U}{dutchcal}{b}{n}
\DeclareMathAlphabet{\mathdutchbcal}{U}{dutchcal}{b}{n}
\newtheorem*{theorem}{Main Theorem}
\newtheorem{Theorem}{Theorem}[section]
\newtheorem{Lemma}[Theorem]{Lemma}
\newtheorem{Proposition}[Theorem]{Proposition}
\theoremstyle{definition}
\newtheorem*{Example}{Example}
\theoremstyle{remark}
\newtheorem*{remark}{Remark}
\title{Module constructions for certain subgroups of the largest Mathieu group}
\author{Lea Beneish} 
\address{Lea Beneish \\
	Department of Mathematics\\
	Emory University\\
    Atlanta, GA 30322\\
    United States}
\email{leabeneish@gmail.com}
\begin{document}
\begin{abstract} For certain subgroups of $M_{24}$, we give vertex operator algebraic module constructions whose associated trace functions are meromorphic Jacobi forms. These meromorphic Jacobi forms are canonically associated to the mock modular forms of Mathieu moonshine. The construction is related to the Conway moonshine module and employs a technique introduced by Anagiannis--Cheng--Harrison. With this construction we are able to give concrete vertex algebraic realizations of certain cuspidal Hecke eigenforms of weight two. In particular, we give explicit realizations of trace functions
whose integralities are equivalent to divisibility conditions on the number of $\mathbb{F}_p$ points on the Jacobians of modular curves.

\end{abstract}
\maketitle
\section{Introduction}
Moonshine refers to unexpected connections between finite simple groups and modular forms. The first such instance was observed by McKay and Thompson in the 1970s and involves the monster group $\mathbb{M}$ and the modular $j$-invariant. This observation led to the monstrous moonshine conjecture of Thompson \cite{thompsonfmf} and Conway--Norton \cite{ConwayNorton}. This conjecture postulates the existence of an infinite-dimensional graded module \[V^\natural=\bigoplus\limits_n V^\natural_n\] such that for each $g$ in the monster group, the graded trace function \[T_g(\tau):=\sum\limits_{n=-1}^\infty \text{tr}(g\mid V_n^\natural)q^n\] is the unique modular function that generates the genus zero function field arising from a specific subgroup $\Gamma_g$ of $SL_2({\mathbb{R}})$, normalized such that $T_g(\tau)=q^{-1}+O(q)$ (where $\tau \in \mathbb{H}$ and $q=e^{2\pi i \tau}$)\cite{ConwayNorton}.

In the next few years, Frenkel, Lepowsky, and Meurman \cite{FLM83, FLM84, FLM} constructed $V^\natural$ as a vertex operator algebra (VOA). Then in 1992, Borcherds used the theory of vertex operator algebras and generalized Kac--Moody algebras (also known as Borcherds--Kac--Moody algebras \cite{BKM}) to show that $V^\natural$ has the properties conjectured by Conway and Norton and thus proved the monstrous moonshine conjecture \cite{Borcherds}. 

Since the proof of the monstrous moonshine conjecture, several other examples of moonshine phenomena have been discovered. Most significant for this work is the 2010 observation by  Eguchi, Ooguri, and Tachikawa \cite{EOT} of a connection between the largest Mathieu group $M_{24}$ and the elliptic genus of $K3$ surfaces. More precisely, they noticed that the low order multiplicities of superconformal algebra characters in the $K3$ elliptic genus are simple linear combinations of irreducible representations of $M_{24}$. This led them to conjecture that there exists an infinite-dimensional graded $M_{24}$-module  \[K^\natural=\bigoplus\limits_n K^\natural_n\] whose trace functions, denoted $H_g(\tau)$, are certain mock modular forms. We refer to \cite{hmfbook,folsom} for background on mock modular forms. In analogy with the work of Conway--Norton, work of Cheng, Eguchi--Hikami, and Gaberdiel--Hohenegger--Volpato \cite{2,3,4,5} determined the mock modular forms $H_g(\tau)$ and then in 2012 Gannon \cite{Gannon} proved the existence of the associated module $K^{\natural}$. 

The analogy between the monster and $M_{24}$ extends further when one considers the relationship between these groups and even unimodular positive-definite lattices of rank $24$. The Leech lattice $\Lambda$ \cite{Leech1, Leech2} was proven by Conway \cite{ConwayonLeech} to be the unique such lattice with no root vectors. It is closely related to the monster. In fact, the Leech lattice was involved in both the construction of the monster by Griess \cite{Griess}, and in the construction of the monster module \cite{FLM, FLM83, FLM84}. The group $M_{24}$ is closely related to another such lattice, the (unique up to isomorphism) even unimodular lattice with rank $24$ and root system $A_1^{24}$ \cite{Niemeier}; $M_{24}$ can be realized as the automorphism group of that lattice modulo the normal subgroup generated by reflections in roots. Cheng, Duncan, and Harvey \cite{umbralconjecture} conjectured that this relationship generalizes. More precisely, they formulated the umbral moonshine conjecture, stating that $M_{24}$ moonshine belongs to a class of $23$ moonshines, each corresponding to one of the $23$ Niemeier lattices with root systems of full rank \cite{Niemeier}. The existence of these umbral moonshine modules was proven in $2015$ by Duncan, Griffin, and Ono \cite{DGO}. There has been recent progress in constructing umbral moonshine modules by Anagiannis--Cheng--Harrison, Cheng--Duncan, Duncan--Harvey, and Duncan--O'Desky (see \cite{DH, DO, CDMeroJacobi, ACH}), however the umbral moonshine theory does not yet include explicit module constructions in all of its cases.

With the intention to further develop the analogy between the monster and $M_{24}$, in \cite{QM} the author associated weight $2$ quasimodular forms $Q_g(\tau)$ to the elements of $M_{24}$. These $Q_g(\tau)$ come from the holomorphic projection of $\eta^3(\tau)\hat{H}_g(\tau)$ where $\hat{H}_g(\tau)$ is the completion of the mock modular form $H_g(\tau)$. The author proves the existence of an $M_{24}$ module with the $Q_g(\tau)$ as trace functions \cite[Theorem 1]{QM}. An application of this is that the integrality of these functions is equivalent to certain divisibility conditions on the number of $\mathbb{F}_p$ points on Jacobians of modular curves \cite[Corollary 1.1]{QM}. Extending this, the author finds trace functions for modules of cyclic groups of arbitrary prime order with similar arithmetic connections \cite[Theorem 2]{QM}. For the cyclic groups of prime order, the author constructs related modules explicitly in terms of vertex operator algebras\cite[Theorem 3]{QM}. However, that construction came at the expense of the divisibility conditions. A major objective of this work is to amend that and give a vertex operator algebra construction for modules whose trace functions exhibit divisibility conditions.

In this work, we offer a modification of the functions $Q_g(\tau)$ and give a module construction that retains the arithmetic information mentioned above. Namely, we add $\chi(g)\left(\eta^{3}(\tau)\mu(\tau,z)+2F_2(\tau)\right)$ to $Q_g(\tau)$, where $\eta(\tau)$ is the Dedekind eta function, $\mu(\tau,z)$ is an Appell--Lerch sum, $\chi(g)$ is the number of fixed points of $g$ in the $24$-dimensional
permutation representation of $M_{24}$, and $F^{}_2(\tau)$ is defined to be \[F_2(\tau):=\sum\limits_{\substack{r>s>0\\ r-s \text{ odd}}}sq^{rs/2}.\]
This allows us to define the following meromorphic Jacobi forms \[{M}_g(\tau,z):=Q_g(\tau)+\chi(g)\left(\eta^{3}(\tau)\mu(\tau,z)+2F_2(\tau)\right),\]
associated to each element $g\in M_{24}$. In Proposition \ref{exists}, we prove the existence of a module for which suitable expansions of the $M_g(\tau,z)$ are trace functions. Because of their relation to $Q_g(\tau)$, these trace functions contain arithmetic information.

Although on the surface, the relationship between $H_g(\tau)$ and $M_g(\tau,z)$ may seem distant, we claim it is natural. We show that one can equivalently define $M_g(\tau,z)$ by writing $M_g(\tau,z)=H_g(\tau)\eta^{3}(\tau)+\chi(g)\eta^{3}(\tau)\mu(\tau,z)$. This type of expression is an example of a canonical decomposition of a meromorphic Jacboi form into a ``finite'' part and ``polar'' part established by Zwegers in \cite{Zweg} and Dabholkar, Murthy, and Zagier in \cite{DMZ}. The relationship between meromorphic Jacobi forms and umbral moonshine was first discussed by Cheng, Duncan, and Harvey in \cite{CDHLattices}. A special case of this is that the mock modular forms $H_g(\tau)$ occur as the ``finite parts'' of meromorphic Jacobi forms. 

 Finding a construction of a module whose trace functions are meromorphic Jacobi forms associated to vector valued mock modular forms in umbral moonshine is considered a natural alternative to finding a construction of a module whose trace functions are the vector valued mock modular forms. In fact, to describe this, Duncan and O'Desky coined the term ``meromorphic module problem'' in \cite{DO} when they solved this problem for the cases of umbral moonshine corresponding to the Niemeier lattices with root systems $A_6^{\oplus 4}$ and $A_{12}^{\oplus 2}$ (and partially for the cases corresponding to $A_3^{\oplus 8}$ and $A_4^{\oplus 6}$). 

For $g\in M_{24}$ such that $[g]\neq$ $3$B, $4$C, $6$B, $12$B, $21$A, $21$B, $23$A, or $23$B, (where we use the ATLAS notation in \cite{conway1985atlas} for conjugacy classes of $M_{24}$), we give concrete constructions of modules whose trace functions are $\widetilde{M}_g(\tau,z)$, where the $\widetilde{M}_g(\tau,z)$ are defined to be the Fourier expansions of  $M_g(\tau,z)$
 in the domain $0<-\text{Im}(z)<\text{Im}(\tau)$. 
\begin{theorem} For subgroups of $M_{24}$ consisting only of elements $g\in M_{24}$ such that $[g]\neq$ 3B, 6B, 12B, 21A, 21B, 23A, or 23B, and such that each element fixes a 4-dimensional space in the $24$-dimensional permutation representation of $M_{24}$, we have the following module construction:
\[\widetilde{A}(\mathfrak{p})^{}_{\emph{tw}}\otimes \mathdutchcal{W}(\mathfrak{b})_{\emph{tw}} \otimes T\] is an infinite-dimensional, bigraded, virtual module with trace functions as follows:
\begin{equation*}
\lim\limits_{\gamma \to -1}\emph{tr}\left(\widehat{g}\mathfrak{z}\widetilde{p}(0)\gamma^{J_{12}(0)}y^{J(0)}q^{L(0)-c/24}\mid \widetilde{A}(\mathfrak{p})^{}_{\emph{tw}}\otimes \mathdutchcal{W}(\mathfrak{b})_{\emph{tw}} \otimes T\right)=\widetilde{M_g}(\tau,z).\end{equation*}

\end{theorem}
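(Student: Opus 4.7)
The plan is to use multiplicativity of the trace over the tensor product. Since $\widehat{g}$, $\mathfrak{z}$, $\widetilde{p}(0)$, $\gamma^{J_{12}(0)}$, $y^{J(0)}$, and $q^{L(0)-c/24}$ all act diagonally across the three tensor factors, the trace on $\widetilde{A}(\mathfrak{p})_{\mathrm{tw}} \otimes \mathdutchcal{W}(\mathfrak{b})_{\mathrm{tw}} \otimes T$ splits as a product of three characters. The strategy is to compute each character, multiply, take the $\gamma \to -1$ limit, and match the result with $M_g(\tau,z) = H_g(\tau)\eta^3(\tau) + \chi(g)\eta^3(\tau)\mu(\tau,z)$ expanded in the strip $0 < -\mathrm{Im}(z) < \mathrm{Im}(\tau)$.

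First, I would compute the character of $\widetilde{A}(\mathfrak{p})_{\mathrm{tw}}$ with the insertions $\widehat{g}$, $\widetilde{p}(0)$, and the $\mathfrak{z}$-twist. This Clifford-type module factor is modeled on the Conway moonshine/ACH framework, and a standard fermionic theta-function computation over the $g$-eigenspaces of the 24-dimensional permutation representation should produce $H_g(\tau)\eta^3(\tau)$, up to sign corrections encoded by $\widetilde{p}(0)$. The exclusion of the conjugacy classes $3$B, $6$B, $12$B, $21$A, $21$B, $23$A, $23$B is precisely the condition under which the natural twist recovers the standard $H_g(\tau)$ of Mathieu moonshine; the remaining cases would require an obstructed twist that the construction does not provide.

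Second, the factor $\mathdutchcal{W}(\mathfrak{b})_{\mathrm{tw}}$ carries the $y = e^{2\pi i z}$ dependence through $J(0)$ and the regularization through $\gamma^{J_{12}(0)}$. On the four-dimensional $g$-fixed sublattice guaranteed by the hypothesis, the corresponding bosonic character is a ratio of theta and eta functions whose limit as $\gamma \to -1$, combined with the $\eta^3$ from the Clifford factor, reproduces the Appell--Lerch sum $\mu(\tau,z)$ weighted by $\chi(g)$. This mirrors the Anagiannis--Cheng--Harrison approach of \cite{ACH}, and the Zwegers/Dabholkar--Murthy--Zagier canonical decomposition of meromorphic Jacobi forms identifies this limit with the polar contribution $\chi(g)\eta^3(\tau)\mu(\tau,z)$ of $M_g(\tau,z)$.

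Finally, the auxiliary module $T$ contributes a finite-dimensional correction which, together with zero-point contributions from the two vertex algebra factors, supplies the $2\chi(g)F_2(\tau)$ term in the alternative expression $M_g(\tau,z) = Q_g(\tau) + \chi(g)(\eta^3(\tau)\mu(\tau,z) + 2F_2(\tau))$; the identity $H_g(\tau)\eta^3(\tau) = Q_g(\tau) + 2\chi(g)F_2(\tau)$, coming from the holomorphic projection definition of $Q_g$ in \cite{QM}, then shows the two forms of $M_g$ agree, and expanding in the prescribed strip yields $\widetilde{M_g}(\tau,z)$. The main obstacle is the $\gamma \to -1$ limit on $\mathdutchcal{W}(\mathfrak{b})_{\mathrm{tw}}$: the trace as a function of $\gamma$ has a pole at $\gamma = -1$ coming from the bosonic zero modes, and the limit must be regularized to extract exactly the polar part of the meromorphic Jacobi form. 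Showing that this regularization is compatible with the group action -- so that the limits assemble into an honest virtual character of the subgroup rather than merely a $g$-by-$g$ prescription -- is the delicate step, and it is here that both the four-dimensional fixed-space hypothesis and the conjugacy class exclusions play essential roles.
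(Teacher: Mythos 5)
Your overall strategy---split the trace over the three tensor factors, compute each character, take $\gamma\to-1$, and match with $M_g$---is the same as the paper's, but your assignment of which factor produces which piece of $M_g(\tau,z)$ is wrong in all three cases, and the computation as you describe it would not go through. The paper's proof rests on the factorization $M_g(\tau,z)=\phi_g(\tau,z)\cdot\eta^4(\tau)\cdot\bigl(\eta^2(\tau)/\theta_1^2(\tau,z)\bigr)$, which follows from Proposition \ref{prop21} (the identity $\eta^6\phi_g/\theta_1^2=H_g\eta^3+\chi(g)\eta^3\mu$, itself a consequence of the Dabholkar--Murthy--Zagier decomposition and Proposition 9.3 of \cite{DMCderived}). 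In that factorization: (i) $\widetilde{A}(\mathfrak{p})_{\mathrm{tw}}$ is a rank-four free-fermion module on which $g$ does not act at all; its trace against $\widetilde{p}(0)$ is just $\eta^4(\tau)$ (Lemma \ref{clifflem}), not $H_g(\tau)\eta^3(\tau)$ as you claim. (ii) $\mathdutchcal{W}(\mathfrak{b})_{\mathrm{tw}}$ carries the $y$-grading via $J_3(0)$ but no $\gamma$ and no $g$-dependence; it contributes the fixed meromorphic factor $\eta^2(\tau)/\theta_1^2(\tau,z)$ (Lemma \ref{W3}), not $\chi(g)\eta^3\mu(\tau,z)$. (iii) $T$ is not a ``finite-dimensional correction'': it is the infinite-dimensional modification of the Conway module $V^{s\natural}_{\mathrm{tw}}$ (built from $B\otimes A(\mathfrak{a})$ with $B=A(\mathfrak{f})\otimes\mathdutchcal{W}(\mathfrak{f})$), it carries the entire group action, and the operator $\gamma^{J_{12}(0)}$ lives on its $\mathdutchcal{W}(\mathfrak{f})$ component. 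The whole weak Jacobi form $\phi_g(\tau,z)$ arises as the $\gamma\to-1$ limit of the trace on $T$ (Lemma \ref{phi4}), which is exactly the Anagiannis--Cheng--Harrison mechanism: the auxiliary boson--fermion pair $B$ cancels in the trace but lets one define the $J$-grading without assuming the whole subgroup fixes a common $4$-space.

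Two further points. The term $2\chi(g)F_2(\tau)$ does not need to be produced by any factor: it cancels identically in the identity $M_g=Q_g+\chi(g)(\eta^3\mu+2F_2)=H_g\eta^3+\chi(g)\eta^3\mu$, since $Q_g=H_g\eta^3-2\chi(g)F_2$ by definition; your search for a zero-point contribution supplying it is chasing a term that is not there. And the excluded conjugacy classes are excluded because for those classes the Conway-module trace $\phi_g$ differs from the Mathieu weak Jacobi form $Z_g$ (they agree for the other $19$ classes), so Proposition \ref{prop21} fails there---this is a statement about the $T$ factor, not about the Clifford factor as you suggest. Without Proposition \ref{prop21} and the correct role of each tensor factor, your argument cannot be completed as written.
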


\noindent We describe this construction in two steps. First we construct a related module, $\widetilde{A}(\mathfrak{p})^{}_{\text{tw}}\otimes \mathdutchcal{W}(\mathfrak{b})_{\text{tw}} \otimes V^{s\natural}_{\text{tw}}$, which is the tensor product of a Clifford module, a Weyl module, and a Conway module of \cite{DMCconway}. For their definitions and the definitions of the operators that we take the trace of, see Section \ref{mod1}. This  gives module constructions for subgroups of $M_{24}$ that do not contain elements in the conjugacy classes 3B, 4C, 6B, 12B, 21A, 21B, 23A, or 23B and such that the $24$-dimensional permutation representation of $M_{24}$ has a fixed $4$-dimensional space when restricted to that subgroup. For example, this gives a module construction for group $\bf{L_3(4)}\simeq \bf{M_{21}}$, one of the simple subgroups of $M_{24}$. This does not, however, give a module construction for $\bf{M_{11}}$ because the $24$-dimensional permutation representation of $M_{24}$ restricted to $\bf{M_{11}}$ only fixes a $3$-dimensional space.

To remedy cases such as $\bf{M_{11}}$ and arrive at the module constructions given in the Main Theorem, we apply a method of Anagiannis, Cheng, and Harrison \cite{ACH}. For these subgroups, we still require that each element of the subgroup fixes a $4$-space but not that the whole subgroup fixes the same $4$-space (we still require that the subgroup does not contain elements in the aforementioned conjugacy classes). Here we use $\widetilde{A}(\mathfrak{p})^{}_{\text{tw}}$ and $\mathdutchcal{W}(\mathfrak{b})_{\text{tw}}$ as before (defined in Section \ref{mod1}) and $T$ is a modification of $V^{s\natural}_{\text{tw}}$ which we define in Section \ref{mod2}. This, for example, gives module constructions for $\bf{M_{22}\colon 2}$ a maximal subgroup of $M_{24}$, for the smallest sporadic simple group $\bf{M_{11}}$, and for groups $\bf{2^4\colon A_7}$ and $\bf{A_8}$ which are maximal subgroups of $M_{23}$.

 The module construction for $\bf{M_{11}}$ gives an explicit realization of the trace functions $\widetilde{M}_g(\tau,z)$ whose integrality is equivalent to divisibility conditions on the number of $\mathbb{F}_p$ points on the Jacobian of the modular curve $X_0(11)$, denoted $J_0(11)$. The same is true with  $\bf{M_{22}\colon 2}$ and $\bf{2^4\colon A_7}$ for $J_0(14)$ and with $\bf{A_8}$ for $J_0(15)$.

Note that our module gives an explicit construction of the restriction of the Mathieu moonshine module to the subgroup $\bf{2^4\colon A_7}$, which has also played a prominent role in the symmetry surfing program initiated by Taormina and Wendland in \cite{TW10, TW13,TW15a,TW15b,TaoWend}. It would be interesting to compare our method to theirs.

This paper is organized as follows. In Section \ref{thefunctions}, we describe the meromorphic Jacobi forms and show that there exists a virtual $M_{24}$ module for which suitable expansions of these meromorphic Jacobi forms are the trace functions. In Section \ref{mod1}, we construct modules for subgroups of $M_{24}$ that exclude certain conjugacy classes and such that the $24$-dimensional permutation representation of $M_{24}$ restricted to that subgroup has a fixed $4$-dimensional space. In Section \ref{mod2}, we prove the Main Theorem by applying a method of Anagiannis, Cheng, and Harrison \cite{ACH} to modify the construction in Section \ref{mod1} so that we can replace the condition that the subgroup must fix a $4$-space with the condition that only each element in that subgroup fixes a $4$-space.
\vspace{-1mm}
\subsection*{Acknowledgements} The author is grateful to John Duncan for suggesting the topic and for his valuable advice and comments on earlier drafts. The author would also like to thank Jackson Morrow for comments on an earlier draft.

\section{The functions}\label{thefunctions}
In this section we describe the meromorphic Jacobi forms $M_g(\tau,z)$. We will explicitly construct modules for which suitable expansions of the $M_g(\tau,z)$ are the trace functions. The module constructions can be found in Sections \ref{mod1} and \ref{mod2}, but first we prove the existence of an overarching virtual $M_{24}$-module.

In order to define the meromorphic Jacobi forms, we recall a few definitions. Let $\eta(\tau)$ be the Dedekind eta function, defined by
\begin{equation} \eta(\tau):=q^{1/24}\prod\limits_{n>0}(1-q^n). \end{equation}
We have the usual Jacobi theta function $\theta_1(\tau,z)$, defined as
\begin{equation} \theta_1(\tau,z):=-i q^{\frac{1}{8}}y^{\frac{1}{2}}\prod\limits_{n>0} (1-y^{-1}q^{n-1})(1-yq^n)(1-q^n), \end{equation}
where $q=e^{2\pi i \tau}$ and $y=e^{2\pi i z}$.
The Appell-Lerch sum $\mu(\tau,z)$ is given by 
\begin{equation}\mu(\tau,z):= \frac{-i y^{1/2}}{\theta_1(\tau,z)}\sum\limits_{n\in \mathbb{Z}} (-1)^n \frac{y^nq^{n(n+1)/2}}{1-yq^n}.\end{equation}

We recall that $\chi(g)$ is the number of fixed points of $g$ in the $24$-dimensional
permutation representation of $M_{24}$, the mock modular forms of weight $1/2$ associated to $g\in M_{24}$ from Mathieu moonshine are denoted by $H_g(\tau)$, and $F^{}_2(\tau)$ is defined as follows \begin{equation}F_2(\tau):=\sum\limits_{\substack{r>s>0\\ r-s \text{ odd}}}sq^{rs/2}.\end{equation}

The quasimodular forms $Q_g(\tau)$, for $g \in M_{24}$, that are the holomorphic projection of the completion of the $H_g(\tau)$ multiplied by $\eta^3(\tau)$ (i.e. $\pi_{\text{hol}}(\widehat{H}_g(\tau)\eta^3(\tau))$ from \cite{QM}) can be defined as \begin{equation}\label{Qg} Q_g(\tau):=H_{g}(\tau)\eta^3(\tau)-2\chi(g)F^{}_2(\tau).\end{equation}

We now define \begin{equation}\phi_{-2,1}(\tau,z):=-\dfrac{\theta_1^2(\tau,z)}{\eta^6(\tau)},\end{equation} and \begin{equation}\phi_{0,1}(\tau,z):=\frac{1}{2} Z_{K3}(\tau,z),\end{equation} where, from \cite{EOT}, we have \begin{equation} Z_{K3}(\tau,z):=24\mu(\tau,z) \dfrac{\theta_1^2(\tau,z)}{\eta^3(\tau)}+H_e(\tau)\dfrac{\theta_1^2(\tau,z)}{\eta^3(\tau)}. \end{equation}

\noindent More generally, for $g\in M_{24}$, we define the following weak Jacobi forms:  \begin{equation} Z_g(\tau,z):=\chi(g)\mu(\tau,z)\dfrac{\theta_1^2(\tau,z)}{\eta^3(\tau)}+H_g(\tau)\dfrac{\theta_1^2(\tau,z)}{\eta^3(\tau)}.\end{equation}

In \cite{DMCderived}, Duncan and Mack-Crane associate weak Jacobi forms $\phi_g(\tau,z)$ of weight zero and index one to  symplectic derived equivalences of projective complex K3 surfaces that fix a stability condition in the distinguished space identified by Bridgeland. They identify such automorphisms with elements of Aut($\Lambda$) (the Conway group $Co_0$) fixing a sublattice of rank greater than or equal to $4$. Since $M_{24}$ is a subgroup of $Co_0$, it is natural to compare the $\phi_g(\tau,z)$ to the weak Jacobi forms $Z_g(\tau,z)$ associated to $g\in M_{24}$, and in fact, these $\phi_g(\tau,z)$ are equal to $Z_g(\tau,z)$ for $g$ in all but $7$ of the $26$ conjugacy classes of $M_{24}$ (those conjugacy classes are: 3B, 4C, 6B, 12B, 21A, 21B, 23A and 23B). For an explicit expression of $\phi_g(\tau,z)$, see equation \eqref{phig}.
 
These $\phi_g(\tau,z)$ are closely related to weight two modular forms $F_g(\tau)$ (not to be confused with $F_2(\tau)$, which is not modular). We take the expression given in Proposition $9.3$ of \cite{DMCderived} as the definition of $F_g(\tau)$:
\begin{equation}{F_g}(\tau)= \dfrac{\phi_g(\tau,z)}{\phi_{-2,1}(\tau,z)}-\dfrac{1}{12}\chi(g)\dfrac{\phi_{0,1}(\tau,z)}{\phi_{-2,1}(\tau,z)}. \end{equation}
We refer to equation $(9.19)$ of \cite{DMCderived} for another definition of $F_g(\tau)$.

We begin with the following proposition in which we combine a result of Dabholkar, Murthy, and Zagier \cite{DMZ}, a result of Duncan and Mack-Crane \cite{DMCderived}, and the functions $Q_g(\tau)$ that were defined by the author in \cite{QM}.

\begin{Proposition}\label{prop21}For $g\in M_{24}$ such that $[g] \neq$ 3B, 4C, 6B, 12B, 21A, 21B, 23A and 23B we have \[\dfrac{\eta^6(\tau)\phi_g(\tau,z)}{\theta_1^2(\tau,z)}=\chi(g)\left(\eta^{3}(\tau)\mu(\tau,z)+2F_2(\tau)\right)+Q_g(\tau)\]
\end{Proposition}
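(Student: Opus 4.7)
The plan is to prove this by direct substitution, assembling three ingredients already laid out in the preceding discussion: the identity $\phi_g(\tau,z) = Z_g(\tau,z)$ established by Duncan--Mack-Crane for exactly the conjugacy classes listed in the hypothesis, the explicit formula for $Z_g(\tau,z)$ in terms of $\mu$, $H_g$, $\theta_1$, and $\eta$, and the defining relation $Q_g(\tau) = H_g(\tau)\eta^3(\tau) - 2\chi(g)F_2(\tau)$ from \eqref{Qg}.

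Concretely, the first step is to replace $\phi_g$ by $Z_g$ on the left-hand side and simplify. The factor $\theta_1^2(\tau,z)/\eta^3(\tau)$ appearing in each summand of $Z_g$ cancels against $\eta^6(\tau)/\theta_1^2(\tau,z)$, leaving $\eta^3(\tau)$, so that
\[\frac{\eta^6(\tau)\phi_g(\tau,z)}{\theta_1^2(\tau,z)} = \chi(g)\eta^3(\tau)\mu(\tau,z) + H_g(\tau)\eta^3(\tau).\]
The second step is to expand the right-hand side of the claimed identity using the formula for $Q_g$; the two contributions $\pm 2\chi(g)F_2(\tau)$ cancel, and what remains is exactly the expression above.

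Conceptually, the identity is the canonical decomposition of the meromorphic function $\eta^6\phi_g/\theta_1^2$ (viewed in the variable $z$) into a polar part $\chi(g)\eta^3\mu$ and a finite part $H_g\eta^3$ in the sense of Zwegers and Dabholkar--Murthy--Zagier; the role of $F_2$ is merely to absorb the quasimodular correction built into $Q_g$ via holomorphic projection. I do not anticipate any genuine obstacle, since the argument reduces to substitution and bookkeeping. The only point requiring care is staying within the conjugacy classes for which $\phi_g = Z_g$, as this equality is precisely what fails for the excluded cases 3B, 4C, 6B, 12B, 21A, 21B, 23A, 23B, and without it the left-hand side would need to be recomputed from a different expression for $\phi_g$.
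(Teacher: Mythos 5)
Your argument is correct as far as it goes, but it takes a different route from the paper, and the difference matters: you take as input the equality $\phi_g(\tau,z)=Z_g(\tau,z)$ for the non-excluded classes, whereas the paper's proof is, in substance, a derivation of that very equality. Note that your intermediate identity $\eta^6(\tau)\phi_g(\tau,z)/\theta_1^2(\tau,z)=\chi(g)\eta^3(\tau)\mu(\tau,z)+H_g(\tau)\eta^3(\tau)$ is literally equivalent to $\phi_g=Z_g$ (multiply through by $\theta_1^2/\eta^6$ and compare with the definition of $Z_g$), so once you assume $\phi_g=Z_g$ the proposition reduces to the two-line bookkeeping with $Q_g=H_g\eta^3-2\chi(g)F_2$ that you describe. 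The paper instead reaches that intermediate identity structurally: it starts from equation $(8.52)$ of Dabholkar--Murthy--Zagier to get $\phi_{0,1}/\phi_{-2,1}=-12\eta^{3}\mu-\tfrac12\eta^{3}H_e$, feeds this into the decomposition $\phi_g=\tfrac{1}{12}\chi(g)\phi_{0,1}+F_g\phi_{-2,1}$ from Proposition $9.3$ of Duncan--Mack-Crane, rewrites $\phi_{-2,1}=-\theta_1^2/\eta^6$, and then eliminates $F_g$ via the relation $H_g\eta^3=\tfrac{\chi(g)}{24}H_e\eta^3-F_g$ from Appendix B of Duncan--Griffin--Ono; the final step with $Q_g$ is the same as yours. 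Your shortcut is legitimate only to the extent that $\phi_g=Z_g$ is a citable black box (the paper does assert it in the expository text, and it is verified in the literature), but be aware that leaning on it makes the proposition nearly tautological and obscures which external inputs are actually doing the work; if you use this route you should supply a precise citation for $\phi_g=Z_g$ on exactly the stated conjugacy classes, since that equality fails for 3B, 4C, 6B, 12B, 21A, 21B, 23A, 23B and is the entire content of the hypothesis.
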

\begin{proof}
From equation $(8.52)$ in \cite{DMZ} we have following: 

\begin{equation}\eta^{-3}(\tau) \dfrac{\phi_{0,1}(\tau,z)}{\phi_{-2,1}(\tau,z)}=-\dfrac{12}{\theta_1(\tau,2z)} Av^{(2)}\left[\frac{1+y}{1-y}\right]-h^{(2)}(\tau). \end{equation}

We note that $2h^{(2)}(\tau)=H_e(\tau)$ and $Av^{(2)}\left[\frac{1+y}{1-y}\right]=\theta_1(\tau,2z)\mu(\tau,z),$ (see Example $2$, Section $8.5$ of \cite{DMZ}) and so we equivalently have

\begin{equation} \eta^{-3}(\tau) \dfrac{\phi_{0,1}(\tau,z)}{\phi_{-2,1}(\tau,z)}=-\dfrac{12}{\theta_1(\tau,2z)} \theta_1(\tau,2z)\mu(\tau,z)-\frac{1}{2}H_e(\tau), \end{equation}

which can be rearranged as follows:

\begin{equation}\label{dmzre} \dfrac{\phi_{0,1}(\tau,z)}{\phi_{-2,1}(\tau,z)}=-12\eta^{3}(\tau)\mu(\tau,z)-\frac{1}{2}\eta^{3}(\tau)H_e(\tau).\end{equation}

Now, from Proposition $9.3$ of \cite{DMCderived}, we have that 

\begin{equation} \phi_g(\tau,z)=\dfrac{1}{12}\chi(g)\phi_{0,1}(\tau,z)+F_g(\tau)\phi_{-2,1}(\tau,z), \end{equation}

which is equivalent to 
\begin{equation} \dfrac{\phi_g(\tau,z)}{\phi_{-2,1}(\tau,z)}=\dfrac{1}{12}\chi(g)\dfrac{\phi_{0,1}(\tau,z)}{\phi_{-2,1}(\tau,z)}+F_g(\tau). \end{equation}

Substituting the right hand side of equation \eqref{dmzre} for $\dfrac{\phi_{0,1}(\tau,z)}{\phi_{-2,1}(\tau,z)}$ we obtain

\begin{equation} \dfrac{\phi_g(\tau,z)}{\phi_{-2,1}(\tau,z)}=\dfrac{1}{12}\chi(g)\left(-12\eta^{3}(\tau)\mu(\tau,z)-\frac{1}{2}\eta^{3}(\tau)H_e(\tau)\right)+F_g(\tau). \end{equation}

Then we use the identity $\phi_{-2,1}(\tau,z)=-\dfrac{\theta_1^2(\tau,z)}{\eta^6(\tau)}$ to rewrite the above as follows:

\begin{equation} \dfrac{-\eta^6(\tau)\phi_g(\tau,z)}{\theta_1^2(\tau,z)}=\dfrac{1}{12}\chi(g)\left(-12\eta^{3}(\tau)\mu(\tau,z)-\frac{1}{2}\eta^{3}(\tau)H_e(\tau)\right)+F_g(\tau), \end{equation}

and simplifying further, we find
\begin{equation} \dfrac{-\eta^6(\tau)\phi_g(\tau,z)}{\theta_1^2(\tau,z)}=-\chi(g)\eta^{3}(\tau)\mu(\tau,z)-\frac{\chi(g)}{24}\eta^{3}(\tau)H_e(\tau)+F_g(\tau). \end{equation}

We recall the formula (from Appendix B of \cite{DGO}, also in \cite{2,3,4,5,holographic}) relating $H_g(\tau)$ and $H_e(\tau)$, for $g\in M_{24}$:

\begin{equation}\label{HeHg} H_g(\tau)\eta^3(\tau)=\dfrac{\chi(g)}{24}H_e(\tau)\eta^3(\tau)-F_g(\tau).\end{equation}

This formula gives us that 
\begin{equation} \dfrac{-\eta^6(\tau)\phi_g(\tau,z)}{\theta_1^2(\tau,z)}=-\chi(g)\eta^{3}(\tau)\mu(\tau,z)-H_g(\tau)\eta^{3}(\tau), \end{equation}
or equivalently,
\begin{equation} \dfrac{\eta^6(\tau)\phi_g(\tau,z)}{\theta_1^2(\tau,z)}=\chi(g)\eta^{3}(\tau)\mu(\tau,z)+H_g(\tau)\eta^{3}(\tau). \end{equation}

Combining this with the quasimodular forms associated to $M_{24}$ in equation \eqref{Qg}, we can write

 \begin{equation} \dfrac{\eta^6(\tau)\phi_g(\tau,z)}{\theta_1^2(\tau,z)}=\chi(g)(\eta^{3}(\tau)\mu(\tau,z)+2F_2(\tau))+Q_g(\tau).\end{equation}\end{proof}
  
 \begin{Lemma} For $g\in M_{24}$ such that $[g] \neq$ 3B, 4C, 6B, 12B, 21A, 21B, 23A and 23B, the $F_g(\tau)$ in equation \eqref{HeHg} (and Appendix B of \cite{DGO}) are the same as the $F_g(\tau)$ in equation $(9.19)$ of \cite{DMCderived}.
 \end{Lemma}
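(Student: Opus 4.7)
The plan is to prove the lemma by reducing both candidate definitions of $F_g(\tau)$ to the same closed-form expression in $H_g$, $H_e$, and $\eta$. I would start from the definition already quoted in the excerpt,
\[F_g(\tau)=\frac{\phi_g(\tau,z)}{\phi_{-2,1}(\tau,z)}-\frac{1}{12}\chi(g)\frac{\phi_{0,1}(\tau,z)}{\phi_{-2,1}(\tau,z)},\]
which is extracted from Proposition $9.3$ of \cite{DMCderived}, and then show that it matches the $F_g(\tau)$ of equation \eqref{HeHg}. The key input is the equality $\phi_g(\tau,z)=Z_g(\tau,z)$ recorded in Section \ref{thefunctions}; this equality holds precisely on those conjugacy classes of $M_{24}$ that do not appear in the list 3B, 4C, 6B, 12B, 21A, 21B, 23A, 23B, and so this is exactly where the hypothesis of the lemma is used.

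Next I would substitute the defining formula for $Z_g(\tau,z)$ together with the identity $\phi_{-2,1}(\tau,z)=-\theta_1^2(\tau,z)/\eta^6(\tau)$ to obtain
\[\frac{\phi_g(\tau,z)}{\phi_{-2,1}(\tau,z)}=-\eta^3(\tau)\bigl(\chi(g)\mu(\tau,z)+H_g(\tau)\bigr),\]
and then invoke the identity \eqref{dmzre} established in the proof of Proposition \ref{prop21} to replace $\phi_{0,1}/\phi_{-2,1}$ by $-12\eta^3(\tau)\mu(\tau,z)-\tfrac{1}{2}\eta^3(\tau)H_e(\tau)$. Plugging both expressions into the formula for $F_g(\tau)$ causes the $\chi(g)\eta^3(\tau)\mu(\tau,z)$ contributions to cancel (as they must, since $F_g$ is independent of $z$), leaving
\[F_g(\tau)=\frac{\chi(g)}{24}H_e(\tau)\eta^3(\tau)-H_g(\tau)\eta^3(\tau),\]
which is a direct rearrangement of \eqref{HeHg} and therefore matches the definition used in Appendix B of \cite{DGO}.

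To close the argument, equation $(9.19)$ of \cite{DMCderived} is a second presentation, internal to that paper, of the same function $F_g(\tau)$ introduced via Proposition $9.3$; the two are declared to represent a single function there, so chaining this identification with the computation above yields the lemma. The main obstacle is not algebraic — the cancellation is short and already implicit in the proof of Proposition \ref{prop21} — but rather the need to import the identification $\phi_g=Z_g$ from \cite{DMCderived}, which is what forces the hypothesis on $[g]$ stated in the lemma.
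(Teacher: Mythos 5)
Your proposal is correct and follows essentially the same route as the paper: both arguments hinge on the identification $\phi_g=Z_g$ away from the excluded classes, the identity $\phi_{-2,1}=-\theta_1^2(\tau,z)/\eta^6(\tau)$, and the expression of $\phi_{0,1}/\phi_{-2,1}$ in terms of $\mu$ and $H_e$. The only cosmetic difference is that the paper reduces both definitions to the common ($z$-dependent) expression $\frac{\chi(g)}{24}\frac{Z_{K3}\eta^6}{\theta_1^2}-\chi(g)\mu\eta^3-H_g\eta^3$, whereas you carry the cancellation one step further to the $z$-free form $\frac{\chi(g)}{24}H_e\eta^3-H_g\eta^3$; the content is the same.
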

 
 \begin{proof}
 For clarity, in the proof of this lemma exclusively, we will write $\widetilde{F_g}(\tau)$ when referring to the $F_g(\tau)$ in \cite{DMCderived} and we will write $F_g(\tau)$ for the $F_g(\tau)$ in \cite{DGO}. We will show that $\widetilde{F_g}(\tau)=F_g(\tau)$.
 
 From Eguchi, Ooguri, and Tachikawa we have the following expression for the $K3$ elliptic genus
 \begin{equation} Z_{K3}(\tau,z)=24\mu(\tau,z) \dfrac{\theta_1^2(\tau,z)}{\eta^3(\tau)}+H_e(\tau)\dfrac{\theta_1^2(\tau,z)}{\eta^3(\tau)}. \end{equation}
 Rearranging the terms of the above equation, we find that 
  \begin{equation} \dfrac{Z_{K3}(\tau,z)\eta^3(\tau)}{\theta_1^2(\tau,z)}=24\mu(\tau,z) +H_e(\tau), \end{equation}
and then multiplying by $\eta^3(\tau)$ and solving for $H_e(\tau)\eta^3(\tau)$ we have 
  
   \begin{equation} H_e(\tau)\eta^3(\tau)=\dfrac{Z_{K3}(\tau,z)\eta^6(\tau)}{\theta_1^2(\tau,z)}-24\mu(\tau,z)\eta^3(\tau). \end{equation}
  We can then substitute the right side of the equation above for $H_e(\tau)\eta^3(\tau)$ in equation \eqref{HeHg} and we obtain:
  
 \begin{equation}
 H_g(\tau)\eta^3(\tau)=\dfrac{\chi(g)}{24}\left(\dfrac{Z_{K3}(\tau,z)\eta^6(\tau)}{\theta_1^2(\tau,z)}-24\mu(\tau,z)\eta^3(\tau)\right)-F_g(\tau).
 \end{equation}
Finally, we solve for $F_g(\tau)$ as follows:
 \begin{equation} F_g(\tau)= \dfrac{\chi(g)}{24}\dfrac{Z_{K3}(\tau,z)\eta^6(\tau)}{\theta_1^2(\tau,z)}-\chi(g)\mu(\tau,z)\eta^3(\tau)-H_g(\tau)\eta^3(\tau).
 \end{equation}
On the other hand, we recall the expression in Proposition $9.3$ of \cite{DMCderived}:

\begin{equation}\widetilde{F_g}(\tau)= \dfrac{\phi_g(\tau,z)}{\phi_{-2,1}(\tau,z)}-\dfrac{1}{12}\chi(g)\dfrac{\phi_{0,1}(\tau,z)}{\phi_{-2,1}(\tau,z)}. \end{equation}
We make the substitutions $\phi_{-2,1}(\tau,z)=-\dfrac{\theta_1^2(\tau,z)}{\eta^6(\tau)}$ and $\phi_{0,1}(\tau,z)=\dfrac{1}{2}Z_{K3}(\tau,z)$ and arrive at:
\begin{equation}\widetilde{F_g}(\tau)= \dfrac{-\phi_g(\tau,z)\eta^6(\tau)}{\theta_1^2(\tau,z)}+\dfrac{\chi(g)}{24}\dfrac{Z_{K3}(\tau,z)\eta^6(\tau)}{\theta_1^2(\tau,z)}. \end{equation}
For $g \in M_{24}$ such that $Z_g(\tau,z)=\phi_g(\tau,z)$, we can substitute $\phi_g(\tau,z)$ for the right hand side of the equation below: \begin{equation} Z_g(\tau,z)=\chi(g)\mu(\tau,z)\dfrac{\theta_1^2(\tau,z)}{\eta^3(\tau)}+H_g(\tau)\dfrac{\theta_1^2(\tau,z)}{\eta^3(\tau)}.\end{equation}
Thus we have 
\begin{equation}
\widetilde{F_g}(\tau)=-\dfrac{\eta^6(\tau)}{\theta_1^2(\tau,z)}\left(\chi(g)\mu(\tau,z)\dfrac{\theta_1^2(\tau,z)}{\eta^3(\tau)}+H_g(\tau)\dfrac{\theta_1^2(\tau,z)}{\eta^3(\tau)}\right) +\dfrac{\chi(g)}{24}\dfrac{Z_{K3}(\tau,z)\eta^6(\tau)}{\theta_1^2(\tau,z)},
\end{equation}
which simplifies to
\begin{equation} \widetilde{F_g}(\tau)=\dfrac{\chi(g)}{24}\dfrac{Z_{K3}(\tau,z)\eta^6(\tau)}{\theta_1^2(\tau,z)}-\chi(g)\mu(\tau,z)\eta^3(\tau)-H_g(\tau)\eta^3(\tau).
\end{equation}
Therefore, we see that $ \widetilde{F_g}(\tau)={F_g}(\tau)$. \end{proof}
 
Now we have described everything we need to define the functions $M_g(\tau,z)$ as follows:
 \begin{equation} M_g(\tau,z):=H_g(\tau)\eta^{3}(\tau)+\chi(g)\eta^{3}(\tau)\mu(\tau,z).\end{equation} We next show that there exists an $M_{24}$-module for which suitable expansions of the $M_g(\tau,z)$ are the graded trace functions. We define $\widetilde{M}_g(\tau,z)$ to be the expansion of $M_g(\tau,z)$ in the domain $0<-\text{Im}(z)<\text{Im}(\tau)$ ($\tau \in \mathbb{H}, z\in \mathbb{C}$).
 
 \begin{Proposition}\label{exists} There exists a virtual bigraded $M_{24}$-module \[V= \bigoplus\limits_{\substack{n,r\in \mathbb{Z}\\ n\geq 0}} V_{n,r}\] such that
\[ \widetilde{M}_g(\tau,z)=  \sum_{n,r} \emph{tr}(g\mid V_{n,r})y^rq^n.\]
 \end{Proposition}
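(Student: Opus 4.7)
The plan is to split the defining expression $M_g(\tau,z)=H_g(\tau)\eta^3(\tau)+\chi(g)\eta^3(\tau)\mu(\tau,z)$ into its two summands and exhibit a virtual bigraded $M_{24}$-module realizing each; the direct sum will then be the required $V$. Since a virtual bigraded module is determined by its graded character, it suffices to verify that for every $(n,r)$ with $n\geq 0$ the class function $g\mapsto [y^rq^n]\widetilde{M}_g(\tau,z)$ is a virtual character of $M_{24}$.

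For the first summand, I would invoke Gannon's proof of the Mathieu moonshine conjecture, which provides a virtual $M_{24}$-module $K^\natural=\bigoplus_{n}K^\natural_n$ whose graded trace is $H_g(\tau)$. Because $\eta^3(\tau)$ has expansion in $q^{1/8}\mathbb{Z}[[q]]$ and this shift cancels the $q^{-1/8}$ appearing in $H_g(\tau)$, the product $H_g(\tau)\eta^3(\tau)$ lies in $\mathbb{Z}[[q]]$ and its coefficient of $q^n$ is a $\mathbb{Z}$-linear combination of the virtual characters $\text{tr}(g\mid K^\natural_m)$, hence itself a virtual character of $M_{24}$. This summand is independent of $z$ and so contributes only to the bidegrees $(n,0)$.

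For the second summand, $\chi$ is the character of the $24$-dimensional permutation representation $\Pi$ of $M_{24}$, so it suffices to show that $\eta^3(\tau)\mu(\tau,z)$ admits an expansion in the region $0<-\text{Im}(z)<\text{Im}(\tau)$ of the form $\sum_{n\geq 0,\ r\in\mathbb{Z}} d_{n,r}\,y^rq^n$ with $d_{n,r}\in\mathbb{Z}$; then $\chi(g)d_{n,r}$ is an integer multiple of the character $\chi$ and hence a virtual character in every bidegree. The expansion is produced by term-by-term geometric expansion of each factor $1/(1-yq^n)$ in the defining series for $\mu(\tau,z)$ — using $|yq^n|<1$ for $n\geq 1$, which holds because $|yq|<1$ in this domain, and expanding the $n=0$ term $1/(1-y)$ in negative powers of $y$ since $|y|>1$ — combined with the integer product expansions of $\eta^3(\tau)$ and of $-iy^{1/2}/\theta_1(\tau,z)$.

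Taking the direct sum of the modules produced in each case yields the desired virtual bigraded $M_{24}$-module $V$. The main technical obstacle is the careful bookkeeping of the Fourier expansion of $\eta^3(\tau)\mu(\tau,z)$ in the specified domain, confirming both the integrality of the coefficients and the support condition $n\geq 0$; these facts are standard consequences of the theory of Appell--Lerch sums as developed by Zwegers and by Dabholkar--Murthy--Zagier, and I would appeal to them rather than recompute the expansion by hand.
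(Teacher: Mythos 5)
Your proposal is correct and follows essentially the same route as the paper: both split $\widetilde{M}_g$ into $H_g\eta^3$ (handled by Gannon's theorem, with $\eta^3$ only re-packaging the virtual characters $\mathrm{tr}(g\mid K^\natural_m)$ over $\mathbb{Z}$) and $\chi(g)\eta^3\mu$ (handled by the integrality of the expansion of $\eta^3\mu$ in the domain $0<-\mathrm{Im}(z)<\mathrm{Im}(\tau)$ together with the fact that $\chi$ is the character of an honest permutation module). The only cosmetic difference is that the paper verifies integrality of the multiplicities $\langle\,\cdot\,,\chi_i\rangle$ irreducible by irreducible, whereas you argue directly that each coefficient is a $\mathbb{Z}$-combination of virtual characters; these are the same check.
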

 \begin{proof}
 For this proof, we restrict to the domain $0<-\text{Im}(z)<\text{Im}(\tau)$. First we show that the $\widetilde{M_g}(\tau,z)$ have integral coefficients. Gannon \cite{Gannon} shows that the functions $H_{g}(\tau)$ have integral coefficients (in all of $\mathbb{H}$, and thus in the domain we specify). It follows from the definition of $\eta(\tau)$ that $\eta^3(\tau)$ has integral coefficients. It remains to show that $\chi(g)\eta^{3}(\tau)\mu(\tau,z)$ has integral coefficients (and from the definition we know the $\chi(g)$ are integers). Because the specified expansion of $\mu(\tau,z)$ is one of the $N=4$ characters \cite{EOT}, its expansion is known to have integral coefficients. Thus we conclude that the $\widetilde{M_g}(\tau,z)$ have integral coefficients.
  
  
 



Next we show that the multiplicities $\text{m}^M_i(n)$ of the $M_{24}$ irreducible representations in the class functions defined by the coefficients of $\widetilde{M_g}(\tau,z)$ are integral. 

Gannon shows that the multiplicity generating function \begin{equation}\label{gannonmi}\sum\limits_{n>0} \text{m}^H_i(n) q^n=\dfrac{1}{|M_{24}|}\sum\limits_{g\in M_{24}} H_g(\tau)\overline{\chi_i(g)}\end{equation} (with $\chi_i$ an irreducible character of $M_{24}$) has integral coefficients.
We need to show that the coefficients $\text{m}^M_i(n)$ are integral, where
\begin{equation}\label{mi}\sum\limits_{n>0} \text{m}^M_i(n) q^n=\dfrac{1}{|M_{24}|}\sum\limits_{g\in M_{24}} \left[H_g(\tau)\eta^3(\tau)-\chi(g)\mu(\tau,z)\eta^3(\tau)\right]\overline{\chi_i(g)}.\end{equation}
To do this, we can split the right hand side of equation (\ref{mi}) into two parts. First consider $\dfrac{1}{|M_{24}|}\sum\limits_{g\in M_{24}} H_g(\tau)\eta^3(\tau)\overline{\chi_i(g)}$. This differs from (\ref{gannonmi}) only from multiplying by $\eta^3(\tau)$, which does not change the integrality. So it suffices to show that $\dfrac{1}{|M_{24}|}\sum\limits_{g\in M_{24}}\chi(g)\mu(\tau,z)\eta^3(\tau)\overline{\chi_i(g)}$ has integral coefficients.

This is the same as showing that \[\mu(\tau,z)\eta^3(\tau)\dfrac{1}{|M_{24}|}\sum\limits_{g\in M_{24}} \chi(g)\overline{\chi_i(g)}=\mu(\tau,z)\eta^3(\tau)\langle \chi, \chi_i \rangle\] has integral coefficients. We already know that $\mu(\tau,z)\eta^3(\tau)$ has integral coefficients (see above). The integrality of $\langle \chi, \chi_i \rangle$ can be seen from the fact that $\chi(g)$ is a character of a module, and so $\langle \chi, \chi_i \rangle$ is the multiplicity of $\chi_i$ in $\chi$, which is necessarily integral. Thus the $\text{m}^M_i(n)$ from (\ref{mi}) are integral. \end{proof}
\begin{remark} In what follows, we will give module constructions such that the graded trace functions on those modules are equal to $\widetilde{M_g}(\tau,z)$. The condition that $\tau \in \mathbb{H}$, $z \in \mathbb{C}$ be such that $0<-\text{Im}(z)<\text{Im}(\tau)$ is necessary to ensure convergence of the graded dimension functions of the modules. In particular, this is what will allow us to identify the series expansions as their graded dimension functions. We will adopt this restriction of the domain for the rest of the paper.
\end{remark}
 \section{Module construction I}\label{mod1}
 In this section, for $g\in M_{24}$ such that $[g]\neq$ 3B, 4C, 6B, 12B, 21A, 21B, 23A, or 23B, we explicitly construct a module whose trace functions are the $\widetilde{M_g}(\tau,z)$ (see Proposition \ref{exists}). This will lead to module constructions for certain subgroups of $M_{24}$ with no elements in any of the above conjugacy classes. We also require that the $24$-dimensional permutation representation of $M_{24}$ has a fixed $4$-dimensional subspace when restricted to that subgroup.
 
 We use the fact that when $g\in M_{24}$, the following holds: 
 By definition of $Q_g(\tau)$, we see that ${M}_g(\tau,z)=Q_g(\tau)+\chi(g)\left(\eta^{3}(\tau)\mu(\tau,z)+2F_2(\tau)\right)$, and by Proposition \ref{prop21} of Section \ref{thefunctions}, for $g$ not in the excluded conjugacy classes as above, we have
 \begin{equation}M_g(\tau,z)=\dfrac{\phi_g(\tau,z)\eta^6(\tau)}{\theta_1^2(\tau,z)}.
 \end{equation}
 We will split this equation into three factors as follows:
  \begin{equation}\label{factors}M_g(\tau,z)=\left(\phi_g(\tau,z)\right)\left(\eta^4(\tau)\right)\left(\dfrac{\eta^2(\tau)}{\theta_1^2(\tau,z)}\right).
 \end{equation}
 
 We postpone the discussion about how to recover the first factor of $M_g(\tau,z)$ for now. The next two lemmas indicate how to recover the second and third of the three factors in Equation \ref{factors}.
 
In order to recover the second factor in \eqref{factors}, we need a module with graded dimension function $\eta^{4}(\tau)$. This can be achieved using a Clifford module vertex superalgebra. For this construction we follow Duncan and Harvey \cite{DH}. We note that the description below can also be found in \cite{QM}. In this setting, let $\mathfrak{p}$ be a one dimensional complex vector space with a non-degenerate symmetric bilinear form. Let ${\hat{\mathfrak{p}}}_{}:=\mathfrak{p}[t,t^{-1}]t^{\frac{1}{2}}$ and $\hat{\mathfrak{p}}_{\text{tw}}:=\mathfrak{p}[t,t^{-1}]$, for $a \in \mathfrak{p}$ we write $a(r)$ for $at^r$ with the bilinear form extended so that $\langle a(r), b(s) \rangle=\langle a,b \rangle \delta_{r+s,0}$.

 We define $\text{Cliff}(\hat{\mathfrak{p}})$ to be the Clifford algebra attached to $\hat{\mathfrak{p}}$. Let $\hat{\mathfrak{p}}^+:=\mathfrak{p}[t]t^{\frac{1}{2}}$ and let $\langle \hat{\mathfrak{p}}^+ \rangle$ be the subalgebra of the Clifford algebra $\text{Cliff}(\hat{\mathfrak{p}})$ generated by $\hat{\mathfrak{p}}^+$. Take $\mathbb{C}\textbf{v}$ to be a $\langle \hat{\mathfrak{p}} ^+\rangle$ module such that $1\textbf{v}=\textbf{v}$ and $p(r)\textbf{v}=0$ for $r>0$.
 Then we define
 \begin{equation}A(\mathfrak{p})_{}^{}:=\text{Cliff}(\hat{\mathfrak{p}} )\otimes_{\langle \hat{\mathfrak{p}}^{+}\rangle} \mathbb{C}\textbf{v},\end{equation}
 and $A(\mathfrak{p})$ has the structure of a vertex superalgebra such that $Y(u\left(-\frac{1}{2}\right)\textbf{v},z)=\sum\limits_{n\in \mathbb{Z}} u(n+\frac{1}{2})z^{-n-1}$ for $u\in \mathfrak{p}$. $A(\mathfrak{p})$ has the structure of a vertex operator superalgebra with central charge $\frac{1}{2}$ when we equip it with the Virasoro element \[ \omega:=
p\left(\frac{-3}{2}\right)p\left(\frac{-1}{2}\right)\textbf{v}, \] 
for $p\in \mathfrak{p}$ such that $\langle p,p \rangle=-2$.

Let $\text{Cliff}(\hat{\mathfrak{p}}_{\text{tw}})$ be the Clifford algebra attached to $\hat{\mathfrak{p}}_{\text{tw}}$. Define $\hat{\mathfrak{p}}_{\text{tw}}^{>}:=\mathfrak{p}[t]t$ and let $\langle \hat{\mathfrak{p}}_{\text{tw}}^{>} \rangle$ be the subalgebra of this Clifford algebra generated by $\hat{\mathfrak{p}}_{\text{tw}}^{>}$. Similarly, define $\hat{\mathfrak{p}}_{\text{tw}}^{-}:=\mathfrak{p}[t^{-1}]$ and $\langle \hat{\mathfrak{p}}_{\text{tw}}^{-} \rangle$. Take $\mathbb{C}\textbf{v}_{\text{tw}}$ to be a $\hat{\mathfrak{p}}_{\text{tw}}^{>}$ module such that $1\textbf{v}_{\text{tw}}=\textbf{v}_{\text{tw}}$ and $a(r)\textbf{v}_{\text{tw}}=0$ for $a\in \mathfrak{p}$ and $r>0$ . For $p\in \mathfrak{p}$ (as before) such that $\langle p,p \rangle=-2$, we have that $p(0)^2=1$ in $\text{Cliff}(\mathfrak{p})$. Define $\textbf{v}_{\text{tw}}^{+}:=(1+p(0))\textbf{v}_{\text{tw}}$ so that $p(0)\textbf{v}_{\text{tw}}^{+}=\textbf{v}_{\text{tw}}^{+}$.  Then we define  \begin{equation}A(\mathfrak{p})_{\text{tw}}^{+}:=\text{Cliff}(\hat{\mathfrak{p}}_{\text{tw}} )\otimes_{\langle \hat{\mathfrak{p}}_{\text{tw}}^{>}\rangle} \mathbb{C}\textbf{v}_{\text{tw}}^{+},\end{equation}

\noindent so that $A(\mathfrak{p})_{\text{tw}}^+$ is isomorphic  (as a $\langle \hat{\mathfrak{p}}_{\text{tw}}^{-} \rangle$-module) to $\bigwedge(p(-n)\mid n>0)\textbf{v}_{\text{tw}}^+$ (where $\bigwedge(x_1, x_2 \dots):= \bigwedge(\oplus_{i=1}^{\infty} \mathbb{C}x_i)$).

By the reconstruction theorem described in \cite{FrenkelBenzvi} we can see that $A(\mathfrak{p})_{\text{tw}}$ is a twisted module for $A(\mathfrak{p})$ with fields $Y_{\text{tw}}\colon A(\mathfrak{p}) \otimes A(\mathfrak{p})_{\text{tw}} \to A(\mathfrak{p})_{\text{tw}}(\!(z^{\frac{1}{2}})\!)$ such that $ Y_{\text{tw}}\left(u\left(\frac{-1}{2}\right)\textbf{v},z\right)=\sum\limits_{n \in \mathbb{Z}} u(n)z^{-n-\frac{1}{2}}$ for $u\in \mathfrak{p}$.  Since $A(\mathfrak{p})_{\text{tw}}^+$ is a submodule of $A(\mathfrak{p})_{\text{tw}}$ (generated by $\textbf{v}^+_{\text{tw}}$), it can be verified that $A(\mathfrak{p})_{\text{tw}}^+$ is a twisted module for $A(\mathfrak{p})$ so that the above map can be restricted to $A(\mathfrak{p})^+_{\text{tw}}$. In fact, ${A}(\mathfrak{p})$ is a canonically twisted ${A}(\mathfrak{p})$-module, by which we mean the twisted module for ${A}(\mathfrak{p})$ with respect to its parity involution (see also \cite{DMCconway}). 

Let $L_2(0)$ be the $L(0)$ operator for the Clifford module vertex superalgebra and $c_2$ its central charge. Then we can see that $\text{tr}\left(p(0)q^{L_2(0)-\frac{c_2}{24}}\mid A(\mathfrak{p})_{\text{tw}}^+\right)= \eta(\tau)$.  We would like a module with graded dimension equal to $\eta^4(\tau)$ so we will consider a tensor product of these $ A(\mathfrak{p})_{\text{tw}}^+$ (we have from \cite{FLH} that the tensor product of vertex superalgebras is naturally a vertex superalgebra and that the tensor product of twisted modules is a twisted module for the tensor product of vertex superalgebras).

 To do this, we define 
\begin{equation} \widetilde{A}({\mathfrak{p}})^{}:= {A}({\mathfrak{p}}_1) \otimes \cdots \otimes {A}(\mathfrak{p}_{4})  \end{equation}
and
\begin{equation} \widetilde{A}({\mathfrak{p}})^{}_{\text{tw}}:= {A}({\mathfrak{p}_{1} })_{\text{tw}}^+\otimes \cdots \otimes {A}(\mathfrak{p}_{4} )_{\text{tw}}^+  \end{equation}
where each $A(\mathfrak{p}_i)_{\text{tw}}^+$ is isomorphic to $\bigwedge(p_i(-n) \mid n>0)\textbf{v}_{\text{tw}}^+$. Then we can define

\begin{equation}\widetilde{Y}_{\text{tw}}\colon \widetilde{A}(\mathfrak{p})^{} \otimes \widetilde{A}(\mathfrak{p})^{+}_{\text{tw}} \to \widetilde{A}(\mathfrak{p})^{+}_{\text{tw}}(\!(z^{\frac{1}{2}})\!)\end{equation} where for $u_i \in {A}(\mathfrak{p}_i)$, \begin{equation}\begin{aligned} \widetilde{Y}_{\text{tw}}(u_1\otimes \cdots \otimes u_4,z)&=Y_{\text{tw}}(u_1,z) \otimes \cdots \otimes Y_{\text{tw}}(u_{4},z)\\&=\sum\limits_{n \in \mathbb{Z}^{4}} u_1(n_1)\otimes \cdots \otimes u_{4}(n_{4})z^{-n_1 \cdots -n_{4} -{2}}, \end{aligned}\end{equation}
with $n^{}=(n_1, \dots, n_{4})$,
and finally  \[ \widetilde{p}^{}(0):=p_1(0)\otimes \cdots \otimes p_{4}(0). \]

\noindent This completes the proof of the following lemma in which we record the second factor of equation \eqref{factors}.
\begin{Lemma}\label{clifflem}  \begin{align*}
\emph{tr}\left(\widetilde{p}^{}(0)q^{L_2(0)-\frac{c_2}{24}}\mid \widetilde{A}(\mathfrak{p})^{}_{\emph{tw}}\right)&= \eta^{4}(\tau).
\end{align*}
\end{Lemma}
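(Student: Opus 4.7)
The plan is to reduce the lemma to a single-tensor-factor computation, then compute that factor directly from the exterior-algebra description of $A(\mathfrak{p})_{\mathrm{tw}}^{+}$. Since $\widetilde{A}(\mathfrak{p})_{\mathrm{tw}}$ is by definition the four-fold tensor product $A(\mathfrak{p}_{1})_{\mathrm{tw}}^{+}\otimes\cdots\otimes A(\mathfrak{p}_{4})_{\mathrm{tw}}^{+}$, the operators $L_{2}(0)$ and $c_{2}$ and $\widetilde{p}(0)$ all decompose across tensor factors (the first as a sum, the latter as a tensor product of the $p_{i}(0)$). Hence the trace factorizes:
\begin{equation*}
\mathrm{tr}\!\left(\widetilde{p}(0)q^{L_{2}(0)-c_{2}/24}\mid \widetilde{A}(\mathfrak{p})_{\mathrm{tw}}\right)=\prod_{i=1}^{4}\mathrm{tr}\!\left(p_{i}(0)q^{L(0)-c/24}\mid A(\mathfrak{p}_{i})_{\mathrm{tw}}^{+}\right),
\end{equation*}
so it suffices to prove the single-factor identity $\mathrm{tr}(p(0)q^{L(0)-c/24}\mid A(\mathfrak{p})_{\mathrm{tw}}^{+})=\eta(\tau)$, which the excerpt in fact already asserts. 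The lemma then follows by raising this to the fourth power.

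To verify the single-factor identity, I would use the isomorphism $A(\mathfrak{p})_{\mathrm{tw}}^{+}\cong\bigwedge(p(-n)\mid n>0)\mathbf{v}_{\mathrm{tw}}^{+}$ as a $\langle\hat{\mathfrak{p}}_{\mathrm{tw}}^{-}\rangle$-module. A basis is indexed by strict partitions $\lambda=(n_{1}>n_{2}>\cdots>n_{k}>0)$, with basis vector $p(-n_{1})\cdots p(-n_{k})\mathbf{v}_{\mathrm{tw}}^{+}$. The Virasoro element $\omega=p(-3/2)p(-1/2)\mathbf{v}$ places $p(-n)$ at conformal weight $n$ and assigns the ground-state conformal weight $1/16$ (the standard Ramond vacuum weight for one real fermion with $c=1/2$), so that the $L(0)$-eigenvalue on the above basis vector is $1/16+|\lambda|$, and hence the overall $q$-prefactor becomes $q^{1/16-1/48}=q^{1/24}$, matching the prefactor of $\eta(\tau)$.

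It remains to track $p(0)$. Because $\{p(m),p(n)\}=\langle p,p\rangle\delta_{m+n,0}=-2\delta_{m+n,0}$ in $\mathrm{Cliff}(\hat{\mathfrak{p}}_{\mathrm{tw}})$, the operator $p(0)$ anticommutes with every $p(-n)$ for $n>0$, while by construction $p(0)\mathbf{v}_{\mathrm{tw}}^{+}=\mathbf{v}_{\mathrm{tw}}^{+}$. Therefore $p(0)$ acts on the strict-partition basis vector above by the sign $(-1)^{k}=(-1)^{\ell(\lambda)}$, giving
\begin{equation*}
\mathrm{tr}(p(0)q^{L(0)-c/24}\mid A(\mathfrak{p})_{\mathrm{tw}}^{+})=q^{1/24}\sum_{\lambda\text{ strict}}(-1)^{\ell(\lambda)}q^{|\lambda|}=q^{1/24}\prod_{n\geq 1}(1-q^{n})=\eta(\tau),
\end{equation*}
using the elementary Euler identity for the product side. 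Taking the fourth power completes the proof.

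I do not expect a genuine obstacle here; the lemma is essentially a bookkeeping exercise. The only subtle point is ensuring the correct Ramond ground-state weight $1/16$ so that the fractional $q$-powers combine to give $q^{1/24}$ per factor; this is fixed by the choice of Virasoro element $\omega=p(-3/2)p(-1/2)\mathbf{v}$ recorded in the excerpt.
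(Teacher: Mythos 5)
Your proof is correct and follows essentially the same route as the paper, which likewise reduces to the single-factor identity $\mathrm{tr}(p(0)q^{L_2(0)-c_2/24}\mid A(\mathfrak{p})_{\mathrm{tw}}^{+})=\eta(\tau)$ and then tensors four copies; the paper simply asserts that identity (``we can see that\ldots''), whereas you supply the strict-partition bookkeeping that justifies it. The only quibble is a harmless convention slip: with the paper's normalization $\langle p,p\rangle=-2$ one has $p(0)^2=+1$, so the Clifford relation should read $\{p(m),p(n)\}=-\langle p,p\rangle\delta_{m+n,0}$; this does not affect the anticommutation for $m+n\neq 0$ or any step you actually use.
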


For the third factor, we require a module with graded dimension function given by the expansion of $\dfrac{\eta^2(\tau)}{\theta_1^2(\tau,z)}$ in our usual domain ($\tau \in \mathbb{H}, z\in \mathbb{C}$ such that $0<-\text{Im}(z)<\text{Im}(\tau)$). To this end, we use a twisted module over a Weyl module vertex operator algebra. We follow Duncan and O'Desky for this construction \cite{DO}. In this setting, let $\mathfrak{b}$ be a $4$-dimensional vector space with a non-degenerate antisymmetric bilinear form. Let ${\hat{\mathfrak{b}}}_{}:=\mathfrak{b}[t,t^{-1}]t^{\frac{1}{2}}$ and $\hat{\mathfrak{b}}_{\text{tw}}:=\mathfrak{b}[t,t^{-1}]$, for $b \in \mathfrak{b}$ we write $b(r)$ for $bt^r$ with the bilinear form extended as in the case of the Clifford module vertex operator algebra.

Let $\text{Weyl}(\hat{\mathfrak{b}})$ be the Weyl algebra associated to $\hat{\mathfrak{b}}$ and its antisymmetric bilinear form. Define  $\hat{\mathfrak{b}}^+:=\mathfrak{b}[t]t^{\frac{1}{2}}$ and  $\hat{\mathfrak{b}}^-:=\mathfrak{b}[t^{-1}]t^{\frac{1}{2}}$ so that $\hat{\mathfrak{b}}=\hat{\mathfrak{b}}^+\bigoplus \hat{\mathfrak{b}}^-$ is a polarization for the antisymmetric bilinear form so that $\hat{\mathfrak{b}}^{\pm}$ is isotropic. Let $\mathbb{C}\textbf{v}$ be the unique unital $\langle \hat{\mathfrak{b}}^+\rangle$-module such that $b\textbf{v}=0$ for every $b \in \hat{\mathfrak{b}}^+$.

We define the Weyl module vertex algebra associated to $\mathfrak{b}$ and the antisymmetric bilinear form to be the unique vertex superalgebra structure on \begin{equation} \mathdutchcal{W}(\mathfrak{b}):= \text{Weyl}(\hat{\mathfrak{b}})\otimes_{\langle {\hat{\mathfrak{b}}}^+\rangle} \mathbb{C}\textbf{v} \end{equation} such that $Y(b\left(\frac{-1}{2}\right)\textbf{v},z)=\sum\limits_{n\in \mathbb{Z}} b(n+\frac{1}{2})z^{-n-1}$ for $b\in \mathfrak{b}$.

Let $\{ b_i^\pm\}$ be a basis for $\mathfrak{b}^\pm$ such that $\langle \langle b_i^\mp,b_j^\pm\rangle=\pm\delta_{ij}$ where $\langle \langle \cdot, \cdot \rangle$ is the antisymmetric bilinear form on $\mathfrak{b}$. Then define \[ \omega:=\dfrac{1}{2}\sum_i \left( b_i^+\left(\frac{-3}{2}\right) b_i^-\left(\frac{-1}{2}\right)-  b_i^+\left(\frac{-1}{2}\right) b_i^-\left(\frac{-3}{2}\right) \right)\textbf{v}. \]

Then equipped with Virasoro element $\omega$, $ \mathdutchcal{W}(\mathfrak{b})$ has the structure of a Weyl module vertex operator algebra.

 
 Similarly, for $\mathfrak{b}^+$ defined to be the span of $\{ b_i^+\}$ and $\mathfrak{b}^-$ defined to be the span of $\{ b_i^-\}$, we can define ${\hat{\mathfrak{b}}}^+_{\text{tw}}:=\mathfrak{b}^+\oplus t \mathfrak{b}[t]$ and ${\hat{\mathfrak{b}}}^-_{\text{tw}}:=\mathfrak{b}^-\oplus t^{-1}\mathfrak{b}[t^{-1}]$.  Let $\mathbb{C}\textbf{v}_{\text{tw}}$ be the unique unital $\langle \hat{\mathfrak{b}}^+_{\text{tw}}\rangle$-module such that $b\textbf{v}_{\text{tw}}=0$ for every $b \in \hat{\mathfrak{b}}^+_{\text{tw}}$. Then $\mathdutchcal{W}(\mathfrak{b})_{\text{tw}}$ has the structure of a twisted $\mathdutchcal{W}(\mathfrak{b})$-module:
   \begin{equation}\mathdutchcal{W}(\mathfrak{b})_{\text{tw}}:= \text{Weyl}(\hat{\mathfrak{b}}_{\text{tw}})\otimes_{\langle {\hat{\mathfrak{b}}}^+_{\text{tw}}\rangle} \mathbb{C}\textbf{v}_{\text{tw}}\end{equation}
   such that $Y_{\text{tw}}(b\left(\frac{-1}{2}\right),z)=\sum\limits_{n\in \mathbb{Z}} b(n)z^{-n-\frac{1}{2}}$ for $b\in \mathfrak{b}$. $\mathdutchcal{W}(\mathfrak{b})_{\text{tw}}$ is the unique (up to equivalence) irreducible canonically twisted $\mathdutchcal{W}(\mathfrak{b})$-module.
 
Denote the central charge of $\mathdutchcal{W}(\mathfrak{b})$ by $c_3$. We also denote by $L_3(n)$ the coefficient of $z^{-n-2}$ in $Y(\omega, z)$ or $Y(\omega, z)_{\text{tw}}$. Letting 

\[ \jmath:=\sum_i b_i^+\left(\frac{-1}{2}\right) b_i^-\left(\frac{-1}{2}\right)\textbf{v}, \]
we denote by $J_3(n)$ the  coefficient of $z^{-n-1}$ in $Y(\jmath, z)$ or $Y(\jmath, z)_{\text{tw}}$. Thus we have a bigrading on both $\mathdutchcal{W}(\mathfrak{b})$ and $\mathdutchcal{W}(\mathfrak{b})_{\text{tw}}$. We focus on the latter, $\mathdutchcal{W}(\mathfrak{b})_{\text{tw}}$, which has bigraded dimension as follows:
\begin{equation}\label{wbtwtrace}
\text{tr}\left(y^{J_3(0)}q^{L_3(0)-\frac{c_3}{24}}\mid \mathdutchcal{W}(\mathfrak{b})_{\text{tw}}\right)=y^{-1}q^{-\frac{1}{6}} \prod\limits_{n>0} (1-y^{-1}q^{n-1})^{-2}(1-yq^n)^{-2}.
\end{equation}
\begin{remark} For the above equation to make sense we should expand the right hand side in the domain to which we have restricted, $0<-\text{Im}(z)<\text{Im}(\tau)$. In other words, each factor of the form $\dfrac{1}{(1-X)}$ should be interpreted as  $\sum\limits_{n\geq0}X^n$.
\end{remark}

\begin{Lemma}\label{W3} \[-\emph{tr}\left(y^{J_3(0)}q^{L_3(0)-\frac{c_3}{24}}\mid \mathdutchcal{W}(\mathfrak{b})_{\emph{tw}}\right)=\dfrac{\eta^2(\tau)}{\theta_1^2(\tau,z)}\]

\end{Lemma}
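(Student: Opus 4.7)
The plan is to reduce the lemma to the already-established product formula in equation \eqref{wbtwtrace} by a direct manipulation with the definitions of $\eta$ and $\theta_1$ from Section \ref{thefunctions}. Since \eqref{wbtwtrace} gives the trace explicitly as an infinite product, it suffices to verify the identity
\[-\frac{\eta^2(\tau)}{\theta_1^2(\tau,z)} = y^{-1}q^{-1/6}\prod_{n>0}(1-y^{-1}q^{n-1})^{-2}(1-yq^n)^{-2}\]
in the domain $0 < -\mathrm{Im}(z) < \mathrm{Im}(\tau)$.

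First I would square the defining product formulas at the start of Section \ref{thefunctions} to obtain $\eta^2(\tau) = q^{1/12}\prod_{n>0}(1-q^n)^2$ and $\theta_1^2(\tau,z) = -q^{1/4}y\prod_{n>0}(1-y^{-1}q^{n-1})^2(1-yq^n)^2(1-q^n)^2$. Taking the quotient, the factors $\prod_{n>0}(1-q^n)^2$ in the numerator and denominator cancel exactly, the scalar prefactor reduces to $q^{1/12-1/4}(-y)^{-1} = -q^{-1/6}y^{-1}$, and the surviving $y$-dependent factors appear to the power $-2$. Multiplying by $-1$ matches the right-hand side of \eqref{wbtwtrace}, and combining with the trace formula there yields the lemma.

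The only technical point requiring attention is the convention by which both sides are interpreted as convergent expansions. In the strip $0 < -\mathrm{Im}(z) < \mathrm{Im}(\tau)$ one has $|q|<1$, $|y^{-1}|<1$, and $|yq|<1$, so each factor $(1-X)^{-1}$ with $X = y^{-1}q^{n-1}$ or $X = yq^n$ may be expanded as $\sum_{k\geq 0}X^k$ and the resulting double-indexed series converges absolutely; this matches the convention spelled out in the remark following \eqref{wbtwtrace} and ensures that the product identity above holds term-by-term as an equality of analytic functions on this domain. There is no real obstacle: the entire argument is a bookkeeping exercise with the Jacobi-type product for $\theta_1$, and the main content of the lemma lies not in the calculation but in the fact that \eqref{wbtwtrace} realizes $-\eta^2/\theta_1^2$ as the bigraded dimension of the twisted Weyl module $\mathdutchcal{W}(\mathfrak{b})_{\mathrm{tw}}$.
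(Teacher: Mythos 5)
Your proposal is correct and follows essentially the same route as the paper's own proof: square the product formulas for $\eta$ and $\theta_1$, cancel the $(1-q^n)^2$ factors, and match the result against the bigraded dimension formula \eqref{wbtwtrace}, with the expansion taken in the domain $0<-\mathrm{Im}(z)<\mathrm{Im}(\tau)$. Your added remark verifying $|q|<1$, $|y^{-1}|<1$, $|yq|<1$ in that strip is a nice explicit justification of the convergence convention that the paper only states in its remark.
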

\begin{proof}

The equation for ${\theta_1(\tau,z)}$ (see Section \ref{thefunctions}) implies the following equation for $\dfrac{1}{\theta_1^2(\tau,z)}$:

\begin{equation} \dfrac{1}{\theta_1^2(\tau,z)}=-y^{-1}q^{-\frac{1}{4}} \prod\limits_{n>0} (1-y^{-1}q^{n-1})^{-2}(1-yq^n)^{-2}(1-q^n)^{-2},
\end{equation}

\noindent Noting that $\eta^2(\tau)=q^{\frac{1}{12}}\prod\limits_{n>0} (1-q^n)^2$ we see that
\begin{equation} \dfrac{\eta^2(\tau)}{\theta_1^2(\tau,z)}=-y^{-1}q^{-\frac{1}{6}} \prod\limits_{n>0} (1-y^{-1}q^{n-1})^{-2}(1-yq^n)^{-2},
\end{equation}
and the expansion of this in our specified domain is equal to the graded dimension of $\mathdutchcal{W}(\mathfrak{b})_{\text{tw}}$ (see \eqref{wbtwtrace}). \end{proof}

 To recover the first factor of \eqref{factors} we need a module with graded dimension function $\phi_g(\tau,z)$. For this we use the canonically twisted $V^{s\natural}$-module, $V^{s\natural}_{\text{tw}}$, where $V^{s\natural}$ is the unique self-dual, rational, $C_2$-cofinite vertex operator superalgebra of CFT type with central charge $12$ such that $L(0)u=\frac{1}{2}u$ for $u\in V^{s\natural}$ implies $u=0$ (cf. Theorem 5.15 \cite{JohnDuke} and Theorem 4.5 \cite{DMCconway}). 
 
 For full details of the construction of $V^{s\natural}_{\text{tw}}$, we refer the reader to Duncan and Mack-Crane \cite{DMCconway}. In what follows we give a brief summary. To define $V^{s\natural}_{\text{tw}}$ here, we start with the construction of Clifford algebra modules (see above), but this time instead of starting with a one-dimensional complex vector space, we take $\mathfrak{a}$ to be a $24$-dimensional complex vector space with a non-degenerate symmetric bilinear form. Let ${\hat{\mathfrak{a}}}_{}:=\mathfrak{a}[t,t^{-1}]t^{\frac{1}{2}}$ and $\hat{\mathfrak{a}}_{\text{tw}}:=\mathfrak{a}[t,t^{-1}]$, for $a \in \mathfrak{a}$ we write $a(r)$ for $at^r$ with the bilinear form extended as before. We define a polarization $\hat{\mathfrak{a}}=\hat{\mathfrak{a}}^+\oplus \hat{\mathfrak{a}}^-$ of $\hat{\mathfrak{a}}$ by setting ${\hat{\mathfrak{a}}}^+:=\mathfrak{a}[t]t^{\frac{1}{2}}$ and ${\hat{\mathfrak{a}}}^-:=\mathfrak{a}[t^{-1}]t^{\frac{-1}{2}}$. Let $\mathbb{C}\textbf{v}$ be the unique unital $\langle \hat{\mathfrak{a}}^+ \rangle$-module such that $a\textbf{v}= 0$ for every $a\in \hat{\mathfrak{a}}^+$.
Then we can define $A(\mathfrak{a})$ to be the $\text{Cliff}(\hat{\mathfrak{a}})$-module:\begin{equation}A(\mathfrak{a}):=\text{Cliff}(\hat{\mathfrak{a}})\otimes_{\langle \hat{\mathfrak{a}}^+ \rangle} \mathbb{C}\mathbf{v},\end{equation} where as $\langle \hat{\mathfrak{a}}^- \rangle$-modules, we have the isomorphism $A(\mathfrak{a})\simeq \bigwedge(\hat{\mathfrak{a}}^- )\mathbf{v}$.

$A(\mathfrak{a})$ has the structure of a vertex superalgebra such that $Y(a\left(-\frac{1}{2}\right)\textbf{v},z)=\sum\limits_{n\in \mathbb{Z}} a(n+\frac{1}{2})z^{-n-1}$ for $a\in \mathfrak{a}$. The super space structure $A(\mathfrak{a})=A(\mathfrak{a})^0 \oplus A(\mathfrak{a})^1$ is given by the parity decomposition on $\bigwedge(\hat{\mathfrak{a}}^- )\mathbf{v}$.
 
 For $\{ e_i \}$ an orthonormal basis for $\mathfrak{a}$, the Virasoro element \[ \omega=-\dfrac{1}{4}\sum_{i=1}^{\text{dim}{\mathfrak{a}}}  e_i\left(-\frac{3}{2}\right) e_i\left(-\frac{1}{2}\right) \textbf{v}, \] gives $A(\mathfrak{a})$ the structure of a vertex operator superalgebra. 
 
 Similarly, for $\mathfrak{a}=\mathfrak{a}^+ \oplus \mathfrak{a}^-$ a polarization of $\mathfrak{a}$ with respect to its non-degenerate symmetric bilinear form, we can define ${\hat{\mathfrak{a}}}^+_{\text{tw}}:=\mathfrak{a}^+\oplus t \mathfrak{a}[t]$ and ${\hat{\mathfrak{a}}}^-_{\text{tw}}:=\mathfrak{a}^-\oplus t^{-1}\mathfrak{a}[t^{-1}]$. Let $\mathbb{C}\textbf{v}_{\text{tw}}$ be the unique unital $\langle \hat{\mathfrak{a}}^+\rangle$-module such that $u\textbf{v}_{\text{tw}}=0$ for $u\in  \hat{\mathfrak{a}}^+$. Then \begin{equation}A(\mathfrak{a})_{\text{tw}}:= \text{Cliff}(\hat{\mathfrak{a}}_{\text{tw}})\otimes_{\langle {\hat{\mathfrak{a}}}^+_{\text{tw}}\rangle} \mathbb{C}\textbf{v}_{\text{tw}} \end{equation} has the  structure of a twisted $A(\mathfrak{a})$-module such that $Y_{\text{tw}}(a\left(-\frac{1}{2}\right),z)=\sum\limits_{n\in \mathbb{Z}} a(n)z^{-n-\frac{1}{2}}$ for $a\in \mathfrak{a}$. This is the unique (up to equivalence) irreducible canonically twisted $A(\mathfrak{a})$-module. We again have the isomorphism $A(\mathfrak{a})_{\text{tw}}\simeq \bigwedge(\hat{\mathfrak{a}}_{\text{tw}}^- )\mathbf{v}_{\text{tw}}$ as $\langle \hat{\mathfrak{a}}_{\text{tw}}^-\rangle$-modules.

We will also define a decomposition of $A(\mathfrak{a})_{\text{tw}}$. For this we first define  $\mathfrak{z}\in \text{Spin}(\mathfrak{a})$ to be the unique lift of $-\text{Id}_{\mathfrak{a}} \in SO(\mathfrak{a})$ to $\text{Spin}(\mathfrak{a})$ such that 
$\mathfrak{z}\mathbf{v}_{\text{tw}}=\mathbf{v}_{\text{tw}}$. The element $\mathfrak{z}$ acts on $A(\mathfrak{a})_{\text{tw}}$ with order two and we denote by $A(\mathfrak{a})^j_{\text{tw}}$ the eigenspace for this action with eigenvalue $(-1)^j$. We can decompose $A(\mathfrak{a})_{\text{tw}}$ into eigenspaces $A(\mathfrak{a})_{\text{tw}}=A(\mathfrak{a})^0_{\text{tw}}\oplus A(\mathfrak{a})^1_{\text{tw}}$. For more on the lift to the spin group, we refer the reader to \cite{DMCconway}.
For the rest of the construction we refer to \cite{DMCderived}. Taking $\mathfrak{a}=\Lambda \otimes_{\mathbb{Z}}\mathbb{C}$, we define \begin{equation} V^{s\natural}=A(\mathfrak{a})^0 \oplus A(\mathfrak{a})^1_{\text{tw}}, \hspace{3mm} V^{s\natural}_{\text{tw}}=A(\mathfrak{a})^1 \oplus A(\mathfrak{a})^0_{\text{tw}}.
\end{equation}
Denote the central charge of $V^{s\natural}$ by $c_1$, and denote by $L_1(n)$ the coefficient of $z^{-n-2}$ in $Y(\omega, z)$ or $Y(\omega, z)_{\text{tw}}$.

We will define an additional operator $J_1(n)$ in order to define a bi-grading on $V^{s\natural}_{\text{tw}}$. To define this operator, first let $\Pi$ be a $4$-dimensional subspace of $\Lambda \otimes_{\mathbb{Z}}\mathbb{C}$ and let $\{x, y, z, w\}$ be an orthonormal basis for $\Pi$. We then define $a_1^{\pm}=\frac{1}{\sqrt{2}}(x\pm {i}y)$ and $a_2^{\pm}=\frac{1}{\sqrt{2}}(z\pm {i}w)$ so that $\langle a_1^{\pm}, a_1^{\mp}\rangle=\langle a_2^{\pm}, a_2^{\mp}\rangle=1$.
Then we let
\[ \jmath:=\frac{1}{2}a_1^-\left(-\frac{1}{2}\right) a_1^+\left(-\frac{1}{2}\right)\mathbf{v}+ \frac{1}{2} a_2^-\left(-\frac{1}{2}\right) a_2^+\left(-\frac{1}{2}\right)\mathbf{v}, \]
and we denote by $J_1(n)$ the coefficient of $z^{-n-1}$ in $Y(\jmath, z)$ or $Y(\jmath, z)_{\text{tw}}$. 
The operators $L_1(0)$ and $J_1(0)$ then equip $V^{s\natural}_{\text{tw}}$ with a bigrading as follows:

\begin{equation}(V^{s\natural}_{\text{tw}})_{{n,r}}=\{v\in V^{s\natural}_{\text{tw}} \mid (L_1(0)-\tfrac{c}{24})v=nv, J_1(0)v=rv \}.
\end{equation}
Taking $\mathfrak{a}=\Lambda \otimes_{\mathbb{Z}}\mathbb{C}$ allows us to identify $Co_0$ (and therefore $M_{24}$) with a subgroup of $SO(\mathfrak{a})$. By Proposition $3.1$ of \cite{DMCconway}, for any subgroup $G$ of $SO(\mathfrak{a})$ which is isomorphic to $Co_0$, there exists a unique lift of $G$ to $\text{Spin}(\mathfrak{a})$ such that the non-trivial central element is $\mathfrak{z}$. We denote this lift by $\widehat{G}$ and for $g\in G$, we denote the lift of $g$ to $\widehat{G}$ by $\widehat{g}$. The spin group acts naturally on $V^{s\natural}$ and $V^{s\natural}_{\text{tw}}$ so we can now state the following lemma: 
\begin{Lemma}\label{phi3}For $g\in G$ fixing a $4$-space in the $24$-dimensional permutation representation of $M_{24}$, we have
\begin{equation}\phi_g(\tau,z)=-\emph{tr}\left(\mathfrak{z}\widehat{g}y^{J_1(0)}q^{L_1(0)-\frac{c_1}{24}}\mid V^{s\natural}_{\emph{tw}}\right).\end{equation} \end{Lemma}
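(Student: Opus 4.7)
The plan is to reduce the identification to the explicit character formula for $\phi_g(\tau,z)$ computed in \cite{DMCderived}. There, the weight zero, index one weak Jacobi form $\phi_g$ attached to $g \in Co_0$ (fixing a sublattice of rank at least $4$) is defined precisely as a graded supertrace on the canonically twisted Conway module $V^{s\natural}_{\text{tw}}$, with $\mathfrak{z}$ implementing the super-sign and $\widehat{g}$ the canonical spin-group lift supplied by Proposition 3.1 of \cite{DMCconway}.

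First, I would fix the choice of $4$-space $\Pi \subset \Lambda \otimes_{\mathbb{Z}} \mathbb{C}$ entering the definition of $\jmath$, taking it to be the complexification of a rank $4$ sublattice fixed by $g$. Via the embedding $M_{24} \hookrightarrow Co_0$, for $g \in M_{24}$ this aligns with the $4$-dimensional fixed subspace in the $24$-dimensional permutation representation, so the hypothesis of the lemma is exactly what is needed to make this choice possible. With this choice, $\widehat{g}$ commutes with $J_1(0)$ and the bigraded trace on the right-hand side is well-defined and descends to a class function of $g$.

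Next, I would decompose $\mathfrak{a} = \Pi \oplus \Pi^\perp$, use the resulting tensor product factorization
\begin{equation*}
A(\mathfrak{a})_{\text{tw}} \simeq A(\Pi)_{\text{tw}} \otimes A(\Pi^\perp)_{\text{tw}},
\end{equation*}
together with the analogous factorization of $A(\mathfrak{a})$ and its parity decomposition. The trace then splits into a $\Pi$-factor, which carries the $y^{J_1(0)}$ insertion and produces the Jacobi-theta dependence in $z$, and a $\Pi^\perp$-factor, which depends on $g$ only through its Frame shape on the $20$-dimensional complement and yields the $\eta$-product part of $\phi_g$. Assembling the two factors and summing over the decomposition $V^{s\natural}_{\text{tw}} = A(\mathfrak{a})^1 \oplus A(\mathfrak{a})^0_{\text{tw}}$ reproduces the explicit product formula for $\phi_g(\tau,z)$ recorded in \cite{DMCderived}. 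The overall minus sign is accounted for by the action of $\mathfrak{z}$ as $(-1)^j$ on $A(\mathfrak{a})^j_{\text{tw}}$ paired with the parity shift in this decomposition.

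The main obstacle is purely bookkeeping with sign and normalization conventions: matching the supertrace sign coming from $\mathfrak{z}$, verifying that the central charge $c_1 = 12$ and the index-one normalization of $J_1(0)$ agree with the conventions of \cite{DMCderived}, and confirming that $\widehat{g}$ is the same canonical lift used there (guaranteed by the uniqueness clause in Proposition 3.1 of \cite{DMCconway}). Once these alignments are made, the lemma is a direct restatement of the trace formula for $\phi_g$ established in \cite{DMCderived}, and the proof is essentially a citation rather than an independent computation.
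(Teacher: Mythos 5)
Your proposal is correct and matches the paper's treatment: the paper states this lemma without proof, treating it as a direct restatement of the graded trace formula for $\phi_g(\tau,z)$ on $V^{s\natural}_{\text{tw}}$ established by Duncan and Mack-Crane, which is exactly the citation your argument reduces to. Your sketch of the factorization $\mathfrak{a}=\Pi\oplus\Pi^{\perp}$ and the resulting split into a theta-quotient factor and an eta-product factor is the same computation the paper later carries out explicitly in Section \ref{mod2} (with the auxiliary space $\mathfrak{f}$ playing the role of $\Pi$), so it is consistent with the conventions used here.
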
\
Define the operators $L(0):=L_1(0)+L_2(0)+L_3(0)$ and $J(0):=J_1(0)+J_3(0)$ and the central charge $c:=c_1+c_2+c_3$. Combining Lemmas \ref{clifflem}, \ref{W3}, and \ref{phi3} we can state the following theorem:
\begin{Theorem} $\widetilde{A}(\mathfrak{p})^{}_{\emph{tw}}\otimes \mathdutchcal{W}(\mathfrak{b})_{\emph{tw}} \otimes V^{s\natural}_{\emph{tw}}$ is an infinite dimensional, bigraded, virtual module with trace functions as follows:
\begin{equation}
\emph{tr}\left(\widehat{g}\mathfrak{z}\widetilde{p}(0)y^{J(0)}q^{L(0)-\frac{c}{24}}\mid \widetilde{A}(\mathfrak{p})^{}_{\emph{tw}}\otimes \mathdutchcal{W}(\mathfrak{b})_{\emph{tw}} \otimes V^{s\natural}_{\emph{tw}}\right)=\widetilde{M_g}(\tau,z).\end{equation}
\end{Theorem}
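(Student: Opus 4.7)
The plan is to exploit the multiplicativity of traces on tensor products. Note that the operator appearing inside the trace is supported on the three tensor factors in a cleanly separated way: $\widetilde{p}(0)$ and $L_2(0)$ act only on $\widetilde{A}(\mathfrak{p})^{}_{\text{tw}}$, while $J_3(0)$ and $L_3(0)$ act only on $\mathdutchcal{W}(\mathfrak{b})_{\text{tw}}$, and $\widehat{g}$, $\mathfrak{z}$, $J_1(0)$, $L_1(0)$ act only on $V^{s\natural}_{\text{tw}}$. Since $L(0)=L_1(0)+L_2(0)+L_3(0)$, $J(0)=J_1(0)+J_3(0)$, and $c=c_1+c_2+c_3$, the exponentials $y^{J(0)}q^{L(0)-c/24}$ factor as tensor products of exponentials on each factor.

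First I would write the trace explicitly as a product:
\begin{equation*}
\operatorname{tr}\!\left(\widetilde{p}(0)q^{L_2(0)-c_2/24} \mid \widetilde{A}(\mathfrak{p})^{}_{\text{tw}}\right)\cdot \operatorname{tr}\!\left(y^{J_3(0)}q^{L_3(0)-c_3/24} \mid \mathdutchcal{W}(\mathfrak{b})_{\text{tw}}\right)\cdot \operatorname{tr}\!\left(\mathfrak{z}\widehat{g}\,y^{J_1(0)}q^{L_1(0)-c_1/24} \mid V^{s\natural}_{\text{tw}}\right).
\end{equation*}
Then I would substitute Lemma \ref{clifflem} for the first factor, Lemma \ref{W3} for the second factor, and Lemma \ref{phi3} for the third factor. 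Since the second and third lemmas each produce a minus sign, these cancel, yielding
\begin{equation*}
\eta^4(\tau)\cdot\left(-\dfrac{\eta^2(\tau)}{\theta_1^2(\tau,z)}\right)\cdot\bigl(-\phi_g(\tau,z)\bigr)=\dfrac{\eta^6(\tau)\phi_g(\tau,z)}{\theta_1^2(\tau,z)}.
\end{equation*}

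Finally, I would invoke Proposition \ref{prop21} to identify this with $\chi(g)\bigl(\eta^3(\tau)\mu(\tau,z)+2F_2(\tau)\bigr)+Q_g(\tau)=M_g(\tau,z)$, and use the domain restriction $0<-\operatorname{Im}(z)<\operatorname{Im}(\tau)$ together with Lemma \ref{W3}'s convergence remark to ensure the geometric series expansions match $\widetilde{M_g}(\tau,z)$. The bigrading is inherited from the eigenvalue decomposition of $L(0)$ and $J(0)$, and virtuality is inherited from the virtual nature of $V^{s\natural}_{\text{tw}}$ as an $M_{24}$-module via the lift $\widehat{g}$.

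The main obstacle is essentially bookkeeping rather than mathematics: one must verify that the tensor product carries compatible actions of $\widehat{g}$ (acting trivially on the first two factors, by the spin lift on the third), and that the $J(0)$-grading on the first factor is trivial (so that $y^{J(0)}$ restricted to $\widetilde{A}(\mathfrak{p})^{}_{\text{tw}}$ contributes no $y$-dependence, which is implicit in Lemma \ref{clifflem}). Once this compatibility is noted, the theorem reduces to the three lemmas plus Proposition \ref{prop21}.
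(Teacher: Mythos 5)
Your proposal is correct and follows essentially the same route as the paper, which states the theorem simply by ``combining Lemmas \ref{clifflem}, \ref{W3}, and \ref{phi3}'' together with the factorization $M_g(\tau,z)=\phi_g(\tau,z)\,\eta^4(\tau)\,\eta^2(\tau)/\theta_1^2(\tau,z)$ from Proposition \ref{prop21}. Your sign bookkeeping (the two minus signs from Lemmas \ref{W3} and \ref{phi3} cancelling) and the remarks on the domain restriction and the trivial actions on the first two tensor factors accurately fill in the details the paper leaves implicit.
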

 This gives a module construction for any subgroup $G$ of $M_{24}$ for which the $24$-dimensional permutation representation of $M_{24}$ restricted to $G$ fixes at least a four dimensional space.
 
  \begin{Example}
 $\widetilde{A}(\mathfrak{p})^{}_{\text{tw}}\otimes \mathdutchcal{W}(\mathfrak{b})_{\text{tw}} \otimes V^{s\natural}_{\text{tw}}$ is a (virtual) module for the group $\bf{L_3(4) \simeq M_{21}}$, one of the simple subgroups of $M_{24}$. One can see via the following fusion of conjugacy classes \[\{ 1A, 2A, 3A, 4B, 4B, 4B, 5A, 5A, 7A, 7B \}\] that the $24$-dimensional representation of $M_{24}$ restricts to $\bf{L_3(4)}$ as $4\psi_1+1\psi_2$ (where $\psi_i$ are irreducible representations of $\bf{L_3(4)}$ and $\psi_1$ is the trivial representation). In particular, we see the permutation representation restricted to $\bf{L_3(4)}$ fixes a four dimensional space.
 \end{Example}
 
\section{Module construction II} \label{mod2}
The construction described in the previous section does not apply in cases where the restriction of the $24$-dimensional permutation representation to a subgroup of $M_{24}$ does not fix a $4$-space. In what follows we give a similar module construction for such subgroups. However, for these subgroups, we still require that each element of the subgroup fixes a $4$-space. Note that this is a weaker requirement than asking that the subgroup itself fixes a $4$-space, because not every element of the subgroup necessarily fixes the same $4$-space. For this construction we apply a method of Anagiannis, Cheng, Harrison \cite{ACH}. In our context, the idea of the method is to view the theta quotients and the eta quotients in $\phi_g(\tau,z)$ (the graded trace functions of $V^{s\natural}_{\text{tw}}$) as coming from dimensions of different spaces (see \eqref{phig}). 

We begin by constructing another module $T$ which we show has the same the trace functions as those from $V^{s\natural}_{\text{tw}}$ (recall that $V^{s\natural}_{\text{tw}}= A(\mathfrak{a})^1\oplus A(\mathfrak{a})_{\text{tw}}^0$). 

We define $\mathfrak{f}:=\mathbb{C}^4$ and equip it with both a non-degenerate symmetric bilinear form $\langle \cdot ,\cdot \rangle$ and a non-degenerate antisymmetric bilinear form $\langle \langle \cdot ,\cdot \rangle$. For convenience we make the choice in such a way that a decomposition $\mathfrak{f}= \mathfrak{f}^+ \bigoplus \mathfrak{f}^-$ serves as a polarization for both bilinear forms. Then we may define
\begin{align}B=A(\mathfrak{f})\otimes \mathdutchcal{W}(\mathfrak{f}) & \text{ and } B_{\text{tw}}=A(\mathfrak{f})_{\text{tw}}\otimes \mathdutchcal{W}(\mathfrak{f})_{\text{tw}},\end{align} where $A(\mathfrak{f})$ and $\mathdutchcal{W}(\mathfrak{f})$ are defined, as before, to be a Cliff($\hat{\mathfrak{f}}$)-module and a $\text{Weyl}(\hat{\mathfrak{f}})$-module associated to $\mathfrak{f}$, each endowed with a vertex superalgebra (resp. vertex algbebra) structure. 

For $A(\mathfrak{f})$ we let $\{ f_i^\pm\}$ be a basis for $\mathfrak{f}^\pm$ such that $\langle f_i^\mp, f_j^\pm\rangle=\delta_{ij}$ where $\langle \cdot ,\cdot \rangle$ is the non-degenerate symmetric bilinear form on $\mathfrak{f}$. We can then define the elements $\jmath:=\sum_i f_i^+({-\frac{1}{2}})f_i^-({-\frac{1}{2}})\mathbf{v}$ and $\omega:=\dfrac{1}{2}\sum_i \left( f_i^+\left(\frac{-3}{2}\right) f_i^-\left(\frac{-1}{2}\right)-  f_i^+\left(\frac{-1}{2}\right) f_i^-\left(\frac{-3}{2}\right) \right)\textbf{v}$ and we denote the corresponding operators by $J_{11}(0)$ and $L_{11}(0)$ and the central charge by $c_{11}$.

Similarly, for $\mathdutchcal{W}(\mathfrak{f})$ we assume that the antisymmetric bilinear form $\langle \langle \cdot ,\cdot \rangle$ on $\mathfrak{f}$ is chosen so that $\langle\langle f_i^\mp, f_j^\pm\rangle=\pm\delta_{ij}$. We define the elements $\jmath:=\sum_i f_i^+({\frac{-1}{2}})f_i^-({\frac{-1}{2}})\mathbf{v}$ and \newline $\omega:=\dfrac{1}{2}\sum_i \left( f_i^+\left(\frac{-3}{2}\right) f_i^-\left(\frac{-1}{2}\right)-  f_i^+\left(\frac{-1}{2}\right) f_i^-\left(\frac{-3}{2}\right) \right)\textbf{v}$ and denote the corresponding operators $J_{12}(0)$ and $L_{12}(0)$ and the central charge $c_{12}$.

Lastly, $A(\mathfrak{a})$, for $\mathfrak{a}=\Lambda \otimes_{\mathbb{Z}} \mathbb{C}$, along with the conformal vector associated to it has already been defined in the previous section, but here we will rename the $L(0)$ operator and the central charge associated to $A(\mathfrak{a})$ as $L_{13}(0)$ and $c_{13}$, respectively. We do not define the element $\jmath$ or the operator $J(0)$ for $A(\mathfrak{a})$ because we are no longer assuming that all $g$ in our subgroup fix a single $4$-space in $\mathfrak{a}$.

We can now make the definition: \[T:=(B \otimes A(\mathfrak{a}))^1 \oplus (B \otimes A(\mathfrak{a}))^0_{\text{tw}}\]
and we can compute the trace of $g \in M_{24}$ (for $g$ that fix a $4$-space, and are in the allowed conjugacy classes) acting on $T$.

We let $\lambda_i^{\pm1}$ be the eigenvalues for $g$ acting on $\mathfrak{a}$. Since we are restricting to $g\in M_{24}$ fixing a $4$-space of $\mathfrak{a}$, we can assume that for two $i$ we have $\lambda_i^{\pm1}=1$. We also define $\nu_i$ to be square roots of the $\lambda_i$ and $\nu=\prod\limits_{i=1}^{12} \nu_i$. Before we can compute the trace of $\widehat{g}\mathfrak{z}y^{J_1(0)}q^{L_1(0)-\frac{c_1}{24}}$ on $T$ we require a few more definitions.

We recall the product formulas of the Jacobi theta functions \begin{equation} \theta_1(\tau,z):=-i q^{\frac{1}{8}}y^{\frac{1}{2}}\prod\limits_{n>0} (1-y^{-1}q^{n-1})(1-yq^n)(1-q^n), \end{equation}
 \begin{equation}\theta_2(\tau,z):= q^{\frac{1}{8}}y^{\frac{1}{2}}\prod\limits_{n>0} (1+y^{-1}q^{n-1})(1+yq^n)(1-q^n), \end{equation}
  \begin{equation}\theta_3(\tau,z):=\prod\limits_{n>0} (1+y^{-1}q^{n-1/2})(1+yq^{n-1/2})(1-q^n), \end{equation}
 and
 \begin{equation} \theta_4(\tau,z):=\prod\limits_{n>0} (1-y^{-1}q^{n-1/2})(1-yq^{n-1/2})(1-q^n). \end{equation}
 We then recall the definition 
 \begin{equation}\eta_g(\tau):=q\prod_{n>0}\prod\limits_{i=1}^{12} (1-\lambda_i^{-1}q^n)(1-\lambda_iq^n), \end{equation}
and note that
\begin{equation}\dfrac{\eta_g({\tau}/{2})}{\eta_g(\tau)}=q^{-\frac{1}{2}}\prod_{n>0}\prod_{i=1}^{12}(1-\lambda_i^{-1}q^{n-\frac{1}{2}})(1-\lambda_iq^{n-\frac{1}{2}}). \end{equation}

\noindent We also define \begin{equation} C_g=\nu\prod_{i=1}^{12} (1-\lambda_i^{-1})\end{equation} and \begin{equation} D_g=\nu^'\prod_{i=1}^{10} (1-\lambda_i^{-1}),\end{equation}
where $\nu^'$ is the product $\prod_{i=1}^{10}\nu_i$ (where we choose the labelling so that $\lambda^\pm_i=1$ for $i=11$ and $i=12$).

With these definitions, we are able to give the following explicit expression for $\phi_g(\tau,z)$ from Proposition 9.2 of \cite{DMCderived}:
\begin{equation}\label{phig}
\phi_g(\tau,z)=-\dfrac{1}{2}\left(\dfrac{\theta_4^2(\tau,z)}{\theta_4^2(\tau,0)}\dfrac{\eta_{-g}(\tau/2)}{\eta_{-g}(\tau)}-\dfrac{\theta_3^2(\tau,z)}{\theta_3^2(\tau,0)}\dfrac{\eta_{-g}(\tau/2)}{\eta_{-g}(\tau)}\right)+\dfrac{1}{2}\left(D_g \eta_g(\tau)\dfrac{\theta_1^2(\tau,z)}{\eta^6(\tau)}+C_{-g}\eta_{-g}(\tau)\dfrac{\theta^2_2(\tau,z)}{\theta^2_2(\tau,0)}\right).
\end{equation}

\noindent In the next few lemmas, we show that the trace of $\widehat{g}\mathfrak{z}y^{J_1(0)}q^{L_1(0)-c_1/24}$ on $T$ is equal to $\phi_g(\tau,z)$.

\begin{Lemma} Let $g\in M_{24}$ such that $\widehat{g}$ fixes a $4$-dimensional space of $\mathfrak{a}$. Let $\mathfrak{z}$ denote the parity involution, let $c_1$ be the central charge, let $L_1(0)$ and $J_{11}(0)$ and $J_{12}(0)$ be operators as before,  then
\begin{equation}\label{qW}  -\lim\limits_{\gamma \to -1} \emph{tr}\left(\widehat{g}\mathfrak{z}y^{J_{11}(0)}\gamma^{J_{12}(0)}q^{L_1(0)-\frac{c_1}{24}}\mid (B \otimes A(\mathfrak{a}))^1\right)=-\dfrac{1}{2}\left(\dfrac{\theta_4^2(\tau,z)}{\theta_4^2(\tau,0)}\dfrac{\eta_{-g}(\tau/2)}{\eta_{-g}(\tau)}-\dfrac{\theta_3^2(\tau,z)}{\theta_3^2(\tau,0)}\dfrac{\eta_{-g}(\tau/2)}{\eta_{-g}(\tau)}\right).
\end{equation}
\end{Lemma}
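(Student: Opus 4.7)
My plan is to reduce the trace over the parity-odd subspace $(B\otimes A(\mathfrak{a}))^1$ to a combination of traces over the full vertex operator algebra $B\otimes A(\mathfrak{a})$, via the projector $\tfrac{1}{2}(1-\mathfrak{z})$, and then exploit the fact that the trace of a tensor product of operators factors as a product of traces over each tensor factor. The parity involution $\mathfrak{z}$ here should be interpreted as the lift of $-\mathrm{Id}_{\mathfrak{f}}$ to the spin/symplectic structure acting on $B=A(\mathfrak{f})\otimes\mathdutchcal{W}(\mathfrak{f})$, acting as $(-1)^F$ on both the Clifford and the Weyl mode-number, while acting trivially on $A(\mathfrak{a})$; accordingly $(B\otimes A(\mathfrak{a}))^1=B^1\otimes A(\mathfrak{a})$. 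Setting $Y=y^{J_{11}(0)}\gamma^{J_{12}(0)}q^{L_1(0)-c_1/24}$, this yields
\[
-\emph{tr}\bigl(\widehat{g}\mathfrak{z}Y\mid (B\otimes A(\mathfrak{a}))^1\bigr)=\tfrac{1}{2}\bigl(\emph{tr}(Y\mid B)-\emph{tr}(\mathfrak{z}Y\mid B)\bigr)\cdot\emph{tr}(\widehat{g}Y\mid A(\mathfrak{a})).
\]

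The next step is to evaluate each of the three trace factors from the standard Fock-space generating functions. For $A(\mathfrak{f})$ with its two pairs of fermionic half-integer modes carrying $J_{11}(0)$-eigenvalues $\pm 1$, the product formula gives $\theta_3^2(\tau,z)/\eta^2(\tau)$, and insertion of $\mathfrak{z}$ flips the signs in the product to produce $\theta_4^2(\tau,z)/\eta^2(\tau)$. For $\mathdutchcal{W}(\mathfrak{f})$, the analogous bosonic product yields
\[
q^{1/12}\prod_{n>0}(1-\gamma q^{n-1/2})^{-2}(1-\gamma^{-1}q^{n-1/2})^{-2};
\]
since $|q|<1$ this converges uniformly on a neighborhood of $\gamma=-1$, so the $\gamma\to-1$ limit commutes with the infinite product and evaluates to $\eta^2(\tau)/\theta_3^2(\tau,0)$, while the $\mathfrak{z}$-twisted version (which effectively substitutes $-\gamma$ for $\gamma$ in each factor) gives $\eta^2(\tau)/\theta_4^2(\tau,0)$ in the same limit. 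For $A(\mathfrak{a})$, diagonalizing $\widehat{g}$ on $\mathfrak{a}$ with paired eigenvalues $\lambda_i^{\pm 1}$ for $i=1,\ldots,12$ and central charge $c_{13}=12$ yields
\[
\emph{tr}(\widehat{g}Y\mid A(\mathfrak{a}))=q^{-1/2}\prod_{n>0}\prod_{i=1}^{12}(1+\lambda_iq^{n-1/2})(1+\lambda_i^{-1}q^{n-1/2}),
\]
which is precisely $\eta_{-g}(\tau/2)/\eta_{-g}(\tau)$ by comparison with the product formula for $\eta_{-g}$ stated earlier in this section.

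Assembling these, the $\gamma\to -1$ limit of $\emph{tr}(Y\mid B)$ becomes $\theta_3^2(\tau,z)/\theta_3^2(\tau,0)$ and that of $\emph{tr}(\mathfrak{z}Y\mid B)$ becomes $\theta_4^2(\tau,z)/\theta_4^2(\tau,0)$, so the displayed identity above simplifies to
\[
\tfrac{1}{2}\Bigl(\tfrac{\theta_3^2(\tau,z)}{\theta_3^2(\tau,0)}-\tfrac{\theta_4^2(\tau,z)}{\theta_4^2(\tau,0)}\Bigr)\tfrac{\eta_{-g}(\tau/2)}{\eta_{-g}(\tau)},
\]
which coincides with the right-hand side of \eqref{qW} after factoring out the overall minus sign. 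I expect the main delicate point to be fixing the conventions so that $\mathfrak{z}$ acts nontrivially on the (formally bosonic) Weyl factor $\mathdutchcal{W}(\mathfrak{f})$ (ensuring the twisted trace swaps $\theta_3^2(\tau,0)$ for $\theta_4^2(\tau,0)$) while acting trivially on $A(\mathfrak{a})$ (so that both terms on the right-hand side carry $\eta_{-g}$ rather than a mixture of $\eta_g$ and $\eta_{-g}$); once these conventions are fixed, the remaining manipulations are routine applications of the product formulas for Jacobi theta functions and the $\eta_g$-quotient identity.
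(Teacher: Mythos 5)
Your overall strategy---projecting onto the odd part with $\tfrac{1}{2}(1-\mathfrak{z})$, factoring the trace over the tensor factors, and evaluating each factor by standard Fock-space product formulas---is exactly the paper's, and your evaluations of the $A(\mathfrak{f})$ and $\mathdutchcal{W}(\mathfrak{f})$ factors and of the untwisted $A(\mathfrak{a})$ trace are all correct. The genuine gap is the convention you flag at the end and then resolve the wrong way: in this paper $\mathfrak{z}$ does \emph{not} act trivially on $A(\mathfrak{a})$. It is the lift of $-\mathrm{Id}_{\mathfrak{a}}$ to $\mathrm{Spin}(\mathfrak{a})$, i.e.\ the parity involution of the full vertex superalgebra $B\otimes A(\mathfrak{a})$, and it acts on $A(\mathfrak{a})\simeq\bigwedge(\hat{\mathfrak{a}}^-)\mathbf{v}$ by $(-1)^{\mathrm{degree}}$. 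Consequently $(B\otimes A(\mathfrak{a}))^1$ is $B^0\otimes A(\mathfrak{a})^1\oplus B^1\otimes A(\mathfrak{a})^0$ rather than $B^1\otimes A(\mathfrak{a})$, and the $A(\mathfrak{a})$ factor of the $\mathfrak{z}$-inserted trace is
\begin{equation*}
\mathrm{tr}\left(\widehat{g}\mathfrak{z}\,q^{L_{13}(0)-\frac{c_{13}}{24}}\mid A(\mathfrak{a})\right)=q^{-\frac{1}{2}}\prod_{n>0}\prod_{i=1}^{12}(1-\lambda_i q^{n-\frac{1}{2}})(1-\lambda_i^{-1}q^{n-\frac{1}{2}})=\frac{\eta_{g}(\tau/2)}{\eta_{g}(\tau)},
\end{equation*}
not $\eta_{-g}(\tau/2)/\eta_{-g}(\tau)$. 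The two terms therefore do not share a common $\eta_{-g}$-quotient.

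The correct value of the left-hand side is $-\tfrac{1}{2}\left(\tfrac{\theta_4^2(\tau,z)}{\theta_4^2(\tau,0)}\tfrac{\eta_{g}(\tau/2)}{\eta_{g}(\tau)}-\tfrac{\theta_3^2(\tau,z)}{\theta_3^2(\tau,0)}\tfrac{\eta_{-g}(\tau/2)}{\eta_{-g}(\tau)}\right)$: the $\theta_4$- and $\eta_g$-products both carry minus signs (they come from the single trace of $\widehat{g}\mathfrak{z}$ over all of $B\otimes A(\mathfrak{a})$), while the $\theta_3$- and $\eta_{-g}$-products both carry plus signs (from the trace of $\widehat{g}$ alone). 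This is what the paper's proof derives in its final display, and it is what Lemma \ref{phi4} needs in order to reproduce $\phi_g$ as in Proposition 9.2 of \cite{DMCderived}. The subscript $-g$ on the $\theta_4$ term in the displayed equation \eqref{qW} (and in \eqref{phig}) is a typo that your convention happens to reproduce; but with $\mathfrak{z}$ trivial on $A(\mathfrak{a})$ the module $T$ would no longer have the same trace functions as $V^{s\natural}_{\mathrm{tw}}$, which defeats the purpose of the Anagiannis--Cheng--Harrison construction. So the step that fails is the identification of $\mathrm{tr}(\widehat{g}\mathfrak{z}\cdots\mid A(\mathfrak{a}))$, and with it the claimed factorization of the answer.
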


\begin{proof} We begin by recalling the projection operator $P^1(g)=\frac{1}{2}(g-\mathfrak{z}g)$. This will allow us to compute the graded trace \eqref{qW} on $(B \otimes A(\mathfrak{a}))^1$ by using the traces of $\widehat{g}\mathfrak{z}y^{J_1(0)}q^{L_1(0)-\frac{c_1}{24}}$ and $\widehat{g}y^{J_1(0)}q^{L_1(0)-\frac{c_1}{24}}$ on $B \otimes A(\mathfrak{a})$.

We will first compute the traces of $\widehat{g}q^{L_1(0)-\frac{c}{24}}$ and $\widehat{g}\mathfrak{z}q^{L_1(0)-\frac{c}{24}}$ on $A(\mathfrak{a})$. The traces are as follows:
\begin{equation}\label{ga}\text{tr}\left(\widehat{g}q^{L_{13}(0)-\frac{c_{13}}{24}}\mid A(\mathfrak{a})\right)= q^{-\frac{1}{2}}\prod\limits_{n>0}(1+q^{n-\frac{1}{2}})^4\prod\limits_{i=1}^{10} (1+\lambda_iq^{n-\frac{1}{2}})(1+\lambda_i^{-1}q^{n-\frac{1}{2}}),
\end{equation}
and 
\begin{equation}\label{zga}\text{tr}\left(\widehat{g}\mathfrak{z}q^{L_{13}(0)-\frac{c_{13}}{24}}\mid A(\mathfrak{a})\right)= q^{-\frac{1}{2}}\prod\limits_{n>0}(1-q^{n-\frac{1}{2}})^4\prod\limits_{i=1}^{10} (1-\lambda_iq^{n-\frac{1}{2}})(1-\lambda_i^{-1}q^{n-\frac{1}{2}}).
\end{equation}

Note that $\widehat{g}$ acts trivially on the components of $B$ because $\mathfrak{f}$ is fixed by $\widehat{g}$. So we compute the traces on the components of $B$ as follows: \begin{equation}\label{af} \text{tr}\left(y^{J_{11}(0)}q^{L_{11}(0)-\frac{c_{11}}{24}} \mid A(\mathfrak{f})\right)=q^{-\frac{1}{12}}\prod\limits_{n>0} (1+y^{-1}q^{n-\frac{1}{2}})^2(1+yq^{n-\frac{1}{2}})^2,
\end{equation}

\begin{equation}\label{zaf} \text{tr}\left(\mathfrak{z}y^{J_{11}(0)}q^{L_{11}(0)-\frac{c_{11}}{24}} \mid A(\mathfrak{f})\right)=q^{-\frac{1}{12}}\prod\limits_{n>0} (1-y^{-1}q^{n-\frac{1}{2}})^2(1-yq^{n-\frac{1}{2}})^2,
\end{equation}

\begin{equation}\label{wf} \text{tr}\left(\gamma^{J_{12}(0)}q^{L_{12}(0)-\frac{c_{12}}{24}} \mid \mathdutchcal{W}(\mathfrak{f})\right)=q^{\frac{1}{12}}\prod\limits_{n>0} (1-\gamma^{-1}q^{n-\frac{1}{2}})^{-2}(1-\gamma q^{n-\frac{1}{2}})^{-2},
\end{equation}
and 
\begin{equation}\label{zwf} \text{tr}\left(\mathfrak{z}\gamma^{J_{12}(0)}q^{L_{12}(0)-\frac{c_{12}}{24}} \mid \mathdutchcal{W}(\mathfrak{f})\right)=q^{\frac{1}{12}}\prod\limits_{n>0} (1+\gamma^{-1}q^{n-\frac{1}{2}})^{-2}(1+\gamma q^{n-\frac{1}{2}})^{-2}.
\end{equation}

Define the operators $L_1(0):=L_{11}(0)+L_{12}(0)+L_{13}(0)$ and the central charge $c_1:=c_{11}+c_{12}+c_{13}$.

We can then combine \cref{ga,af,wf} and take the limit as $\gamma \to -1$ to compute
 \begin{equation}\begin{aligned}\text{tr}\Big(\widehat{g}\mathfrak{z}q^{L_1(0)-\frac{c_1}{24}}y^{J_{11}(0)}\gamma^{J_{12}(0)}&\mid B\otimes A(\mathfrak{a})\Big)\\
 &=q^{-\frac{1}{2}}\prod\limits_{n>0} (1+y^{-1}q^{n-\frac{1}{2}})^2 (1+yq^{n-\frac{1}{2}})^2 \prod\limits_{i=1}^{10} (1+\lambda_i^{-1}q^{n-\frac{1}{2}})(1+\lambda_iq^{n-\frac{1}{2}})\\
&=\dfrac{\theta_3^2(\tau,z)}{\theta_3^2(\tau,0)}\dfrac{\eta_{-g}(\tau/2)}{\eta_{-g}(\tau)}.
\end{aligned}
\end{equation}

Similarly, we combine \cref{zga,zaf,zwf} and take the limit as $\gamma \to -1$, to get:
 \begin{equation}\begin{aligned}\text{tr}\Big(\widehat{g}\mathfrak{z}q^{L_1(0)-\frac{c_1}{24}}y^{J_{11}(0)}\gamma^{J_{12}(0)}&\mid B\otimes A(\mathfrak{a})\Big)\\
 &=q^{-\frac{1}{2}}\prod\limits_{n>0} (1-y^{-1}q^{n-\frac{1}{2}})^2 (1-yq^{n-\frac{1}{2}})^2 \prod\limits_{i=1}^{10} (1-\lambda_i^{-1}q^{n-\frac{1}{2}})(1-\lambda_iq^{n-\frac{1}{2}})\\
&=\dfrac{\theta_4^2(\tau,z)}{\theta_4^2(\tau,0)}\dfrac{\eta_{g}(\tau/2)}{\eta_{g}(\tau)}.
\end{aligned}
\end{equation}

Now that we have computed the traces of $\widehat{g}\mathfrak{z}y^{J_1(0)}q^{L_1(0)-c_1/24}$ and $\widehat{g}y^{J_1(0)}q^{L_1(0)-c_1/24}$ on $B \otimes A(\mathfrak{a})$, we can compute the projection onto $(B \otimes A(\mathfrak{a}))^1$ as follows:
\begin{equation}
 \begin{aligned} \text{tr}\Big(\widehat{g}\mathfrak{z}q^{L_1(0)-\frac{c_1}{24}}y^{J_{11}(0)}\gamma^{J_{12}(0)}&\mid (B\otimes A(\mathfrak{a}))^1\Big) \\ \nonumber &=\dfrac{1}{2}\Big(\text{tr}\left(\widehat{g}\mathfrak{z}q^{L_1(0)-c_1/24}y^{J_{11}(0)}\gamma^{J_{12}(0)}\mid B\otimes A(\mathfrak{a})\right) \\
 &-\text{tr}\left(\widehat{g}q^{L_1(0)-c_1/24}y^{J_{11}(0)}\gamma^{J_{12}(0)}\mid B\otimes A(\mathfrak{a})\right) \Big).
\end{aligned}
\end{equation}
Thus, letting $\gamma \to -1$, we have:
\begin{equation}
 \text{tr}\Big(\widehat{g}\mathfrak{z}q^{L_1(0)-\frac{c_1}{24}}y^{J_{11}(0)}\gamma^{J_{12}(0)}\mid (B\otimes A(\mathfrak{a}))^1\Big)\to \dfrac{1}{2}\left(\dfrac{\theta_4^2(\tau,z)}{\theta_4^2(\tau,0)}\dfrac{\eta_{g}(\tau/2)}{\eta_{g}(\tau)}-\dfrac{\theta_3^2(\tau,z)}{\theta_3^2(\tau,0)}\dfrac{\eta_{-g}(\tau/2)}{\eta_{-g}(\tau)}\right).
 \end{equation}\end{proof}

\begin{Lemma} Let $g\in M_{24}$ such that $\widehat{g}$ fixes a $4$-dimensional space of $\mathfrak{a}$. Let $\mathfrak{z}$ denote the parity involution, let $c_1$ be the central charge, let $L_1(0)$ and $J_{11}(0)$ and $J_{12}(0)$ be operators as before, then 
\begin{equation}\label{qW1}  -\lim\limits_{\gamma \to -1} \emph{tr}\left(\widehat{g}\mathfrak{z}y^{J_{11}(0)}\gamma^{J_{12}(0)}q^{L_1(0)-\frac{c_1}{24}}\mid  (B \otimes A(\mathfrak{a}))^0_{\emph{tw}}\right)=-\dfrac{1}{2}\left(D_g \eta_g(\tau)\dfrac{\theta_1^2(\tau,z)}{\eta^6(\tau)}+C_{-g}\eta_{-g}(\tau)\dfrac{\theta^2_2(\tau,z)}{\theta^2_2(\tau,0)}\right).
\end{equation}
\end{Lemma}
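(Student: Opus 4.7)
The plan is to run the same argument as in the proof of the preceding lemma, replacing the odd-parity projector $P^1(g) = \tfrac{1}{2}(g-\mathfrak{z}g)$ by the even-parity projector $P^0(g) = \tfrac{1}{2}(g+\mathfrak{z}g)$ and replacing each untwisted Fock factor by its canonically twisted counterpart. Since $\mathfrak{z}$ acts as $(-1)^j$ on the summand $(B\otimes A(\mathfrak{a}))^j_{\text{tw}}$, the projection onto the even-parity part gives
\[\text{tr}\bigl(\widehat{g}\mathfrak{z}\,\mathcal{O}\mid (B\otimes A(\mathfrak{a}))^0_{\text{tw}}\bigr) = \tfrac{1}{2}\bigl(\text{tr}(\widehat{g}\mathfrak{z}\,\mathcal{O}\mid B_{\text{tw}}\otimes A(\mathfrak{a})_{\text{tw}}) + \text{tr}(\widehat{g}\,\mathcal{O}\mid B_{\text{tw}}\otimes A(\mathfrak{a})_{\text{tw}})\bigr),\]
where $\mathcal{O} = y^{J_{11}(0)}\gamma^{J_{12}(0)}q^{L_1(0)-c_1/24}$. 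Thus the task reduces to computing the two traces on $B_{\text{tw}}\otimes A(\mathfrak{a})_{\text{tw}}$ (with and without the $\mathfrak{z}$-insertion) and taking $\gamma\to -1$.

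First I would write down the four graded traces of $y^{J_{11}(0)}q^{L_{11}(0)-c_{11}/24}$ (with and without $\mathfrak{z}$) on $A(\mathfrak{f})_{\text{tw}}$ and of $\gamma^{J_{12}(0)}q^{L_{12}(0)-c_{12}/24}$ (with and without $\mathfrak{z}$) on $\mathdutchcal{W}(\mathfrak{f})_{\text{tw}}$. These are the twisted analogs of equations \eqref{af}--\eqref{zwf}: the modes on the twisted Fock space are integer-indexed rather than half-integer-indexed, so $q^{n-1/2}$ in each product is replaced by $q^n$, and the Clifford and Weyl zero modes on the ground-state modules contribute additional $y$- and $\gamma$-dependent prefactors. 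Next I would compute the traces of $\widehat{g}q^{L_{13}(0)-c_{13}/24}$ and $\widehat{g}\mathfrak{z}q^{L_{13}(0)-c_{13}/24}$ on $A(\mathfrak{a})_{\text{tw}}$; the insertion of $\mathfrak{z}$ flips the sign of each single-particle fermion factor, and the ground-state trace acquires the $C_g$-type constants determined by the zero-mode action of $\widehat{g}$ on the canonical spinor module.

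Tensoring the three factors and averaging over $\mathfrak{z}$ as above, the two summands should separate naturally into a $\theta_2^2(\tau,z)/\theta_2^2(\tau,0)$-piece (from the trace without $\mathfrak{z}$ on $A(\mathfrak{a})_{\text{tw}}$, whose infinite product $\prod_{n>0}\prod_i(1+\lambda_i q^n)(1+\lambda_i^{-1}q^n)$ reassembles into $C_{-g}\eta_{-g}(\tau)$ times the appropriate theta quotient built from the $\mathfrak{f}$-factor) and a $\theta_1^2(\tau,z)/\eta^6(\tau)$-piece (from the trace with $\mathfrak{z}$ on $A(\mathfrak{a})_{\text{tw}}$, whose infinite product $\prod_{n>0}\prod_i(1-\lambda_i q^n)(1-\lambda_i^{-1}q^n)$ reassembles into $D_g\eta_g(\tau)$ times the theta quotient). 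Taking the $\gamma\to -1$ limit, matching these combinations with the $\mathfrak{f}$-factor contributions via the product formulas for $\theta_1,\theta_2,\eta$, and comparing with \eqref{phig} then yields the claimed identity (up to the overall minus sign).

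The main obstacle will be the careful bookkeeping of the zero-mode determinants on $A(\mathfrak{a})_{\text{tw}}$ when $g$ has fixed points: for such $g$, the naive constant $C_g = \nu\prod_{i=1}^{12}(1-\lambda_i^{-1})$ vanishes, because two of the $(1-\lambda_i^{-1})$ factors are zero, so one must verify that these missing factors are precisely supplied by the $\gamma\to -1$ limit of the $\mathdutchcal{W}(\mathfrak{f})_{\text{tw}}$-factor acting together with $A(\mathfrak{f})_{\text{tw}}$. This is the reason that $D_g$ (a product over only the ten non-trivial eigenvalue pairs) rather than $C_g$ appears in the stated right-hand side, and confirming that the powers of $\eta$ and the theta quotients line up correctly through the $\gamma\to -1$ limit is the substantive content of the calculation. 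Once the four basic trace formulas and the projection identity above are in hand, the structural parallel with the preceding lemma dictates the rest of the argument.
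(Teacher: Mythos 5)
Your plan is essentially the paper's own proof: project onto the even eigenspace, compute the twisted analogues of the six Fock-space traces (the $A(\mathfrak{f})_{\text{tw}}$ and $\mathdutchcal{W}(\mathfrak{f})_{\text{tw}}$ traces with and without $\mathfrak{z}$, plus the two $A(\mathfrak{a})_{\text{tw}}$ traces carrying $\nu$ and the $C_g$, $D_g$ constants), tensor, and let $\gamma\to-1$; the pole--zero cancellation you flag as supplying the factors missing from the vanishing $C_g$ is exactly the remark the paper makes after \eqref{converge}, and your identification of which trace produces the $\theta_1^2/\eta^6$ piece and which the $\theta_2^2/\theta_2^2(\tau,0)$ piece agrees with the paper. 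The one point to fix before executing the plan is the sign in your projection identity: you write $\tfrac12\bigl(\operatorname{tr}(\widehat{g}\mathfrak{z}\,\mathcal{O})+\operatorname{tr}(\widehat{g}\,\mathcal{O})\bigr)$, the literal consequence of $P^0(g)=\tfrac12(g+\mathfrak{z}g)$ with $\mathfrak{z}^2=1$, whereas the paper's displayed computation takes the \emph{difference} of the two traces --- and since the no-$\mathfrak{z}$ trace on $(B\otimes A(\mathfrak{a}))_{\text{tw}}$ comes out as $-C_{-g}\eta_{-g}(\tau)\theta_2^2(\tau,z)/\theta_2^2(\tau,0)$ (the leading minus arising from the $\gamma^{-1}$ ground-state prefactor of $\mathdutchcal{W}(\mathfrak{f})_{\text{tw}}$ at $\gamma=-1$), your plus sign would yield a relative minus between the two terms of the right-hand side rather than the plus appearing in the statement. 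So the ``overall minus sign'' you defer is really a relative sign between the two summands, and resolving it (i.e., justifying which projection convention applies to the canonically twisted module) is the one substantive point your outline leaves open.
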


\begin{proof} We begin by recalling the projection operator $P^0(g)=\frac{1}{2}(g+\mathfrak{z}g)$. This will allow us to compute the graded trace \eqref{qW1} on $(B \otimes A(\mathfrak{a}))_{\text{tw}}^0$ by using the traces of $\widehat{g}\mathfrak{z}y^{J_1(0)}q^{L_1(0)-\frac{c_1}{24}}$ and $\widehat{g}y^{J_1(0)}q^{L_1(0)-\frac{c_1}{24}}$ on $(B \otimes A(\mathfrak{a}))_{\text{tw}}$.

We will first compute the traces of $\widehat{g}q^{L_1(0)-c/24}$ and $\widehat{g}\mathfrak{z}q^{L_1(0)-c/24}$ on $A(\mathfrak{a})_{\text{tw}}$. The traces are as follows:
\begin{equation}\label{gatw}\text{tr}\left(\widehat{g}q^{L_{13}(0)-\frac{c_{13}}{24}}\mid A(\mathfrak{a})_{\text{tw}}\right)= \nu q\prod\limits_{n>0}(1+q^{n-1})^2(1+q^{n})^2\prod\limits_{i=1}^{10} (1+\lambda_iq^{n})(1+\lambda_i^{-1}q^{n-1}),
\end{equation}

\begin{equation}\label{zgatw}\text{tr}\left(\widehat{g}\mathfrak{z}q^{L_{13}(0)-\frac{c_{13}}{24}}\mid A(\mathfrak{a})_{\text{tw}}\right)= \nu q\prod\limits_{n>0}(1-q^{n-1})^2(1-q^{n})^2\prod\limits_{i=1}^{10} (1-\lambda_iq^{n})(1-\lambda_i^{-1}q^{n-1}).
\end{equation}

Note that $\widehat{g}$ acts trivially on the components of $B_{\text{tw}}$ because $\mathfrak{f}$ is fixed by $\widehat{g}$. So we compute the traces on the components of $B_{\text{tw}}$ as follows: 
\begin{equation}\label{aftw} \text{tr}\left(y^{J_{11}(0)}q^{L_{11}(0)-\frac{c_{11}}{24}} \mid A(\mathfrak{f})_{\text{tw}}\right)=yq^{\frac{1}{6}}\prod\limits_{n>0} (1+y^{-1}q^{n-1})^{2}(1+yq^{n})^2,
\end{equation}

\begin{equation}\label{zaftw} \text{tr}\left(\mathfrak{z}y^{J_{11}(0)}q^{L_{11}(0)-\frac{c_{11}}{24}} \mid A(\mathfrak{f})_{\text{tw}}\right)=yq^{\frac{1}{6}}\prod\limits_{n>0} (1-y^{-1}q^{n-1})^{2}(1-yq^{n})^2,
\end{equation}
\begin{equation}\label{wftw} \text{tr}\left(\gamma^{J_{12}(0)}q^{L_{12}(0)-\frac{c_{12}}{24}} \mid \mathdutchcal{W}(\mathfrak{f})_{\text{tw}}\right)=\gamma^{-1}q^{-\frac{1}{6}}\prod\limits_{n>0} (1-\gamma^{-1}q^{n-1})^{-2}(1-\gamma q^{n})^{-2},
\end{equation}
and
\begin{equation}\label{zwftw} \text{tr}\left(\mathfrak{z}\gamma^{J_{12}(0)}q^{L_{12}(0)-\frac{c_{12}}{24}} \mid \mathdutchcal{W}(\mathfrak{f})_{\text{tw}}\right)=\gamma^{-1}q^{-\frac{1}{6}}\prod\limits_{n>0} (1+\gamma^{-1}q^{n-1})^{-2}(1+\gamma q^{n})^{-2}.
\end{equation}
As before, we have the operators $L_1(0):=L_{11}(0)+L_{12}(0)+L_{13}(0)$ and the central charge $c_1:=c_{11}+c_{12}+c_{13}$.

We combine \cref{gatw,aftw,wftw} and let $\gamma \to -1$ to compute
\begin{equation}\begin{aligned}\text{tr}\Big(\widehat{g}q^{L_1(0)-\frac{c_1}{24}}y^{J_{11}(0)}\gamma^{J_{12}(0)}&\mid (B\otimes A(\mathfrak{a}))_{\text{tw}}\Big)\\
&=-y\nu q\prod\limits_{n>0}\prod\limits_{i=1}^{10}(1+\lambda_iq^n)(1+\lambda_iq^{n-1})(1+y^{-1}q^{n-1})^2(1+yq^n)^2\\
 &=-C_{-g}\eta_{-g}(\tau)\dfrac{\theta_2^2(\tau,z)}{\theta_2^2(\tau,0)},
\end{aligned}
\end{equation}

and similarly, we combine \cref{zgatw,zaftw,zwftw} and take the limit as $\gamma \to -1$ to compute
\begin{equation}\label{converge}\begin{aligned}\text{tr}\Big(\widehat{g}\mathfrak{z}q^{L_1(0)-\frac{c_1}{24}}y^{J_{11}(0)}\gamma^{J_{12}(0)}&\mid (B\otimes A(\mathfrak{a}))_{\text{tw}}\Big)\\
 &=-y\nu q\prod\limits_{n>0}\prod\limits_{i=1}^{10}(1-\lambda_iq^n)(1-\lambda_iq^{n-1})(1-y^{-1}q^{n-1})^2(1-yq^n)^2\\
 &=D_g \eta_g(\tau)\dfrac{\theta_1^2(\tau,z)}{\eta^6(\tau)}.
\end{aligned}
\end{equation}
We note that letting $\gamma \to -1$ in equation \eqref{converge} does not cause any problems with convergence because the double pole that results from taking this limit in the $n=1$ term of $(1-q^{n-1})^{-2}$ from \cref{zwftw} is canceled by the double zero coming from the $n=1$ term $(1-q^{n-1})^2$ in \cref{zgatw}.

Now that we have computed the traces of $\widehat{g}\mathfrak{z}y^{J_1(0)}q^{L_1(0)-\frac{c_1}{24}}$ and $\widehat{g}y^{J_1(0)}q^{L_1(0)-\frac{c_1}{24}}$ on $(B \otimes A(\mathfrak{a}))_{\text{tw}}$, we can compute the projection onto $(B \otimes A(\mathfrak{a}))_{\text{tw}}^0$ as follows:

  \begin{align} \text{tr}\Big(\widehat{g}\mathfrak{z}q^{L_1(0)-\frac{c_1}{24}}y^{J_{11}(0)}\gamma^{J_{12}(0)}&\mid (B\otimes A(\mathfrak{a}))^0_{\text{tw}}\Big) \\ \nonumber &=\dfrac{1}{2}\Big(\text{tr}\left(\widehat{g}\mathfrak{z}q^{L_1(0)-\frac{c_1}{24}}y^{{J_{11}(0)}\gamma^{J_{12}(0)}}\mid (B\otimes A(\mathfrak{a}))_{\text{tw}}\right)\\ \nonumber&-\text{tr}\left(\widehat{g}q^{L_1(0)-\frac{c_1}{24}}y^{J_{11}(0)}\gamma^{J_{12}(0)}\mid (B\otimes A(\mathfrak{a}))_{\text{tw}}\right) \Big).
\end{align}
Thus, letting $\gamma \to -1$, we get
 \begin{align} \text{tr}\Big(\widehat{g}\mathfrak{z}q^{L_1(0)-\frac{c_1}{24}}y^{J_{11}(0)}\gamma^{J_{12}(0)}&\mid (B\otimes A(\mathfrak{a}))^0_{\text{tw}}\Big)\to\dfrac{1}{2}\left(D_g \eta_g(\tau)\dfrac{\theta_1^2(\tau,z)}{\eta^6(\tau)}+C_{-g}\eta_{-g}(\tau)\dfrac{\theta_2^2(\tau,z)}{\theta_2^2(\tau,0)}\right).
 \end{align}\end{proof}
 
 \begin{Lemma}\label{phi4} Let $\widehat{g}$, $\mathfrak{z}$, $J_{11}(0)$, $J_{12}(0)$, $L_1(0)$, let $c_1$ be as before, then we have \begin{align} -\lim\limits_{\gamma \to -1}\emph{tr}\Big(\widehat{g}\mathfrak{z}q^{L_1(0)-\frac{c_1}{24}}y^{J_{11}(0)}\gamma^{J_{12}(0)}&\mid T\Big)\nonumber \\  &=-\dfrac{1}{2}\left(\dfrac{\theta_4^2(\tau,z)}{\theta_4^2(\tau,0)}\dfrac{\eta_{g}(\tau/2)}{\eta_{g}(\tau)}-\dfrac{\theta_3^2(\tau,z)}{\theta_3^2(\tau,0)}\dfrac{\eta_{-g}(\tau/2)}{\eta_{-g}(\tau)}\right)\\
&-\dfrac{1}{2}\left(D_g \eta_g(\tau)\dfrac{\theta_1^2(\tau,z)}{\eta^6(\tau)}+C_{-g}\eta_{-g}(\tau)\dfrac{\theta_2^2(\tau,z)}{\theta_2^2(\tau,0)}\right) \nonumber \\
&=\phi_g(\tau,z).\nonumber \end{align}
 
 \end{Lemma}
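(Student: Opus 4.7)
The plan is straightforward: since $T$ is defined as the direct sum
\[ T = (B \otimes A(\mathfrak{a}))^1 \oplus (B \otimes A(\mathfrak{a}))^0_{\text{tw}}, \]
the trace of any operator on $T$ is the sum of its traces on the two summands. The hard work has already been done in the two preceding lemmas, which compute the limits of the traces of $\widehat{g}\mathfrak{z}q^{L_1(0)-c_1/24}y^{J_{11}(0)}\gamma^{J_{12}(0)}$ on each summand as $\gamma \to -1$.

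First I would observe that, by the additivity of trace on direct sums, one has
\[ -\lim_{\gamma \to -1}\text{tr}\bigl(\widehat{g}\mathfrak{z}q^{L_1(0)-\tfrac{c_1}{24}}y^{J_{11}(0)}\gamma^{J_{12}(0)}\mid T\bigr) = A + B, \]
where $A$ is the negative of the limit on $(B\otimes A(\mathfrak{a}))^1$ and $B$ is the negative of the limit on $(B\otimes A(\mathfrak{a}))^0_{\text{tw}}$. The previous two lemmas identify $A$ with the first parenthesized expression in the statement (the theta-ratio plus eta-quotient terms) and $B$ with the second parenthesized expression (the $D_g$ and $C_{-g}$ terms). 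Adding these immediately yields the middle line of the claim.

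Next, to recognize the resulting expression as $\phi_g(\tau,z)$, I would simply quote the explicit formula (\ref{phig}) for $\phi_g(\tau,z)$ from Proposition 9.2 of \cite{DMCderived}, which is exactly the sum of these four terms with the signs that appear here. The match is term-for-term, so no further manipulation is required.

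The only potential subtlety is that taking $\gamma \to -1$ inside individual factors of $\mathdutchcal{W}(\mathfrak{f})_{\text{tw}}$ produces a divergent factor $(1-q^{n-1})^{-2}$ at $n=1$; however, this is already addressed in the preceding lemma, where the divergence is explicitly cancelled by the matching zero from the $A(\mathfrak{a})_{\text{tw}}$ factor. Thus the limits $A$ and $B$ both exist as established, and the present lemma requires no new convergence argument. The main ``obstacle,'' such as it is, is purely bookkeeping: making sure that the signs and the identification between the theta/eta expressions and the formula for $\phi_g(\tau,z)$ in equation (\ref{phig}) are matched correctly, which is a routine check.
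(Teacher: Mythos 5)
Your proposal is correct and matches the paper's intent exactly: the paper omits the proof of this lemma precisely because, as you argue, it follows by additivity of the trace over the direct sum $T = (B \otimes A(\mathfrak{a}))^1 \oplus (B \otimes A(\mathfrak{a}))^0_{\text{tw}}$ together with the two preceding lemmas and the explicit formula \eqref{phig} for $\phi_g(\tau,z)$. Your remark that the convergence issue as $\gamma \to -1$ was already handled in the preceding lemma is also consistent with the paper's treatment.
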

We omit the proof of this lemma because the statement follows from the two lemmas immediately before it.

Now we have shown that the module $T$ recovers the trace functions $\phi_g(\tau,z)$.

Define the operators $L(0):=L_1(0)+L_2(0)+L_3(0)$ and $J(0):=J_{11}(0)+J_3(0)$ and the central charge $c:=c_1+c_2+c_3$. Combining Lemmas \ref{clifflem}, \ref{W3}, and \ref{phi4} we can state the following theorem.
\begin{Theorem} $\widetilde{A}(\mathfrak{p})^{}_{\emph{tw}}\otimes \mathdutchcal{W}(\mathfrak{b})_{\emph{tw}} \otimes T$ is an infinite dimensional, bigraded, virtual module with trace functions as follows:
\begin{equation}
\lim\limits_{\gamma \to -1} \emph{tr}\left(\widehat{g}\mathfrak{z}\widetilde{p}(0)\gamma^{J_{12}(0)}y^{J(0)}q^{L(0)-\frac{c}{24}}\mid \widetilde{A}(\mathfrak{p})^{}_{\emph{tw}}\otimes \mathdutchcal{W}(\mathfrak{b})_{\emph{tw}} \otimes T\right)=\widetilde{M}_g(\tau,z).\end{equation}
\end{Theorem}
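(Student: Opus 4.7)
The plan is to reduce the theorem to a direct combination of the three preceding lemmas via the multiplicativity of trace over tensor products. Since $\widetilde{A}(\mathfrak{p})_{\text{tw}}$, $\mathdutchcal{W}(\mathfrak{b})_{\text{tw}}$, and $T$ are modules for commuting actions (the group action $\widehat{g}\mathfrak{z}$ acts trivially on the first two factors, the $\widetilde{p}(0)$ operator acts only on the first factor, and the operators $\gamma^{J_{12}(0)}$ and $J_{11}(0)$ appearing inside $J(0)$ act only on $T$ and $\mathdutchcal{W}(\mathfrak{b})_{\text{tw}}$ respectively), the graded trace on the tensor product factors as a product of traces on each tensorand. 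Then $L(0)=L_1(0)+L_2(0)+L_3(0)$ and $c=c_1+c_2+c_3$ ensure that the $q$-expansions combine correctly, and $J(0)=J_1(0)+J_3(0)$ distributes the $y$-grading between the $T$ factor (via $J_{11}(0)$, identified with $J_1(0)$ in the bigrading of $V^{s\natural}_{\text{tw}}$) and the Weyl factor (via $J_3(0)$).

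With this setup, I would first separate the trace as
\[
\lim_{\gamma\to-1}\operatorname{tr}(\widetilde{p}(0)q^{L_2(0)-c_2/24}\mid \widetilde{A}(\mathfrak{p})_{\text{tw}})\cdot \operatorname{tr}(y^{J_3(0)}q^{L_3(0)-c_3/24}\mid \mathdutchcal{W}(\mathfrak{b})_{\text{tw}})\cdot \operatorname{tr}(\widehat{g}\mathfrak{z}y^{J_{11}(0)}\gamma^{J_{12}(0)}q^{L_1(0)-c_1/24}\mid T),
\]
noting that the limit $\gamma\to -1$ only touches the third factor (this is where Lemma \ref{phi4} already guarantees the needed convergence). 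Then applying Lemma \ref{clifflem} gives $\eta^4(\tau)$ for the first factor, Lemma \ref{W3} gives $-\eta^2(\tau)/\theta_1^2(\tau,z)$ for the second, and Lemma \ref{phi4} gives $-\phi_g(\tau,z)$ for the third (since the statement of Lemma \ref{phi4} records the trace with a minus sign out front). Multiplying yields
\[
\eta^4(\tau)\cdot\bigl(-\tfrac{\eta^2(\tau)}{\theta_1^2(\tau,z)}\bigr)\cdot\bigl(-\phi_g(\tau,z)\bigr)=\frac{\eta^6(\tau)\phi_g(\tau,z)}{\theta_1^2(\tau,z)}.
\]

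Finally I would invoke Proposition \ref{prop21} to identify this product with $\chi(g)\bigl(\eta^3(\tau)\mu(\tau,z)+2F_2(\tau)\bigr)+Q_g(\tau)=M_g(\tau,z)$, and then observe that the series expansions arising from the graded dimension computations of Lemmas \ref{clifflem}, \ref{W3}, \ref{phi4} all converge in the domain $0<-\operatorname{Im}(z)<\operatorname{Im}(\tau)$, as recorded in the remark following Proposition \ref{exists}, so the resulting expansion is exactly $\widetilde{M}_g(\tau,z)$.

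The main thing to verify carefully is that the commuting-operator bookkeeping above is legitimate, i.e. that each operator in the trace really acts on a single tensor factor so that multiplicativity applies, and that the hypothesis on $g$ (not in the excluded conjugacy classes, each element fixing a $4$-space in $\mathfrak{a}$) is exactly what is needed for the $T$-factor trace in Lemma \ref{phi4} to equal $\phi_g(\tau,z)$; this is the only genuinely delicate step, since Proposition \ref{prop21} and the identification with $M_g(\tau,z)$ also require that $g$ avoid the classes 3B, 4C, 6B, 12B, 21A, 21B, 23A, 23B. Once this compatibility of hypotheses is checked, there is no further obstacle and the proof is a one-line assembly of the three lemmas followed by an application of Proposition \ref{prop21}.
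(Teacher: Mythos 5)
Your proposal is correct and follows essentially the same route as the paper, which proves this theorem by exactly the assembly you describe: factor the trace over the three tensorands, apply Lemmas \ref{clifflem}, \ref{W3}, and \ref{phi4} (with the signs combining as $\eta^4\cdot(-\eta^2/\theta_1^2)\cdot(-\phi_g)=\eta^6\phi_g/\theta_1^2$), and identify the result with $M_g(\tau,z)$ via Proposition \ref{prop21} and the expansion convention of the domain $0<-\operatorname{Im}(z)<\operatorname{Im}(\tau)$. Your attention to the sign bookkeeping and to the compatibility of the hypotheses on $g$ is a faithful (indeed slightly more explicit) rendering of the paper's argument.
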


\begin{Example}
$\widetilde{A}(\mathfrak{p})^{}_{\text{tw}}\otimes \mathdutchcal{W}(\mathfrak{b})_{\text{tw}} \otimes T$ is a (virtual) module for the group $\bf{M_{22}\colon 2}$, one of the maximal subgroups of $M_{24}$. One can see via the following fusion of conjugacy classes: $$\{ 1A, 2A, 3A, 4B, 4B, 5A, 6A, 7A, 7B, 8A,11A, 2A, 2B, 4A, 4B, 6A, 8A, 10A, 12A, 14A, 14B \}$$ and from looking at the cycle structure of each of these conjugacy classes that each element of $\bf{M_{22}\colon 2}$ fixes a $4$-dimensional space. (Although all of $\bf{M_{22}\colon 2}$ only fixes a $2$-dimensional space).
\end{Example}

 \begin{Example}
 $\widetilde{A}(\mathfrak{p})^{}_{\text{tw}}\otimes \mathdutchcal{W}(\mathfrak{b})_{\text{tw}} \otimes V^{s\natural}_{\text{tw}}$ is a (virtual) module for the group $\bf{2^4\colon A_7}$, one of the maximal subgroups of $M_{23}$. One can see this via the following fusion of conjugacy classes:
  \[\{ 1A, 2A, 2A, 4B, 3A, 3A, 6A, 4B, 8A, 5A, 6A, 7A, 14A, 7B, 14B \}\]
 and from looking at the cycle structure of each of these conjugacy classes that each element of $\bf{2^4\colon A_7}$ fixes a $4$-dimensional space. (Although all of $\bf{2^4\colon A_7}$ only fixes a $3$-dimensional space).
 \end{Example}

\begin{Example}
$\widetilde{A}(\mathfrak{p})^{}_{\text{tw}}\otimes \mathdutchcal{W}(\mathfrak{b})_{\text{tw}} \otimes T$ is a (virtual) module for the group $\bf{A_8}$, one of the maximal subgroups of $M_{23}$. One can see via the following fusion of conjugacy classes: $$\{ 1A, 2A, 2A, 3A, 3A, 4B, 4B, 5A, 6A, 6A, 7A, 7B,15A, 15B \}$$ and from looking at the cycle structure of each of these conjugacy classes that each element of $\bf{A_8}$ fixes a $4$-dimensional space. (Although all of $\bf{A_8}$ only fixes a $3$-dimensional space).
 \end{Example}

\begin{Example}
$\widetilde{A}(\mathfrak{p})^{}_{\text{tw}}\otimes \mathdutchcal{W}(\mathfrak{b})_{\text{tw}} \otimes T$ is a (virtual) module for the smallest sporadic group $\bf{M_{11}}$, one of the subgroups of $M_{24}$. One can see via the following fusion of conjugacy classes: $$\{ 1A, 2A, 3A, 4B, 5A, 6A, 8A, 8A, 11A, 11A \}$$ and from looking at the cycle structure of each of these conjugacy classes that each element of $\bf{M_{11}}$ fixes a $4$-dimensional space. (Although all of $\bf{M_{11}}$ only fixes a $3$-dimensional space).
 \end{Example}
 
 \begin{remark} The module for $\bf{M_{11}}$ restricts in particular to a module for $\bf{\mathbb{Z}/ 11 \mathbb{Z}}$. This gives an explicit realization of the module in \cite{QM} for which the integrality of its trace functions is equivalent to divisibility conditions on the number of $\mathbb{F}_p$ points on $J_0(11)$ because of the cusp forms in the expressions (see equation 3.2 and Appendix A of \cite{QM}).
 
 Similarly the modules for $\bf{M_{22}\colon 2}$ and $\bf{2^4\colon A_7}$ give explicit realizations of modules for which the integrality of their trace functions are equivalent to divisibility conditions on the number of $\mathbb{F}_p$ points on $J_0(14)$ and the module for $\bf{A_8}$ gives an explicit realization of a module for which the integrality of its trace functions is equivalent to divisibility conditions on the number of $\mathbb{F}_p$ points on $J_0(15)$.
 \end{remark}

 \bibliography{MMMbibtex}{}
\bibliographystyle{plain}

\end{document}